\documentclass[reqno,12pt]{amsart}
\usepackage{a4wide,color,eucal,enumerate,mathrsfs}
\usepackage[normalem]{ulem}
\usepackage{amsmath,amssymb,epsfig,amsthm} 
\usepackage[latin1]{inputenc}
\usepackage{psfrag}
\usepackage{hyperref,cleveref,tikz}

\def\loc{{\mathop\mathrm{\,loc\,}}}

\def\bint{{\ifinner\rlap{\bf\kern.35em--}
\int\else\rlap{\bf\kern.45em--}\int\fi}\ignorespaces}

\def\bbint{{\ifinner\rlap{\bf\kern.35em--}
\hspace{0.078cm}\int\else\rlap{\bf\kern.45em--}\int\fi}\ignorespaces}

\newcommand{\R}{\mathbb R}

\newcommand{\Sph}{\mathbb S}

\newcommand{\divg}{\operatorname{div}}

\newtheorem{thm}{Theorem}[section]
\newtheorem{lem}[thm]{Lemma}
\newtheorem{prop}[thm]{Proposition}
\newtheorem{cor}[thm]{Corollary}

\newtheorem{assu}[thm]{Assumption}

\numberwithin{equation}{section}

\theoremstyle{remark}
\newtheorem{rem}[thm]{Remark}

\def\bint{{\ifinner\rlap{\bf\kern.35em--}
\int\else\rlap{\bf\kern.45em--}\int\fi}\ignorespaces}

\usepackage{graphicx}
\usepackage{import}
\usepackage{xifthen}
\usepackage{pdfpages}
\usepackage{transparent}

\title[A Pohozaev-neck proof of a critical p-Laplacian Harnack inequality]{A Pohozaev-type neck proof of a conditional Harnack inequality in the critical $p$-Laplacian setting}
\author{Guolin Qin, Yi Ru-Ya Zhang}
\date{\today}

\address{State Key Laboratory of Mathematical Sciences, Academy of Mathematics and Systems Science, Chinese Academy of Sciences, Beijing 100190, China}
\email{qinguolin18@mails.ucas.ac.cn}

\address{Institute of Mathematics, Academy of Mathematics and Systems Science, the Chinese Academy of Sciences, Beijing 100190, China}

\email{yzhang@amss.ac.cn}
 \thanks{Both of the authors are funded by the National Key R\&D Program of China  Grant No. 2025YFA1018400. The second author is also funded by  the National Key R\&D Program of China  Grant  No. 2021YFA1003100, NSFC Grant No. 12288201 \& No. 12571128, the Chinese Academy of Sciences, and CAS Project for Young Scientists in Basic Research, Grant No. YSBR-031. }

\subjclass[2020]{35J92, 35B33, 35B45}
\keywords{Critical p-Laplacian, Harnack inequality, Pohozaev identity, isolated singularities, neck lemma}

\begin{document}

\begin{abstract}
We prove a conditional Schoen-type Harnack inequality for positive weak solutions of the critical $p$-Laplace equation
$$
        -\Delta_p u=g(u),\qquad 1<p<n,
$$
under a global critical Sobolev growth assumption and the monotonicity condition that
$s^{-(p^*-1)}g(s)$ is nonincreasing. The result is conditional on two inputs,  the classification of bounded positive entire blow-up limits as Aubin--Talenti $p$-bubbles and a preliminary singular-rate upper control on the normalized necks. Under these two hypotheses, solutions in $B_{3R}$ satisfy
$$
 \Big(\sup_{B_R}u\Big)\Big(\inf_{B_{2R}}u\Big)^{p-1}
 \le C R^{p-n}.
$$
The main point is a Pohozaev-neck argument which upgrades the preliminary singular decay rate $|x|^{-(n-p)/p}$ to the sharp $p$-harmonic fundamental rate $|x|^{-(n-p)/(p-1)}$. The argument replaces the Kelvin-transform and moving-sphere methods available in the conformally invariant semilinear case $p=2$, but unavailable for the general $p$-Laplacian.
\end{abstract}


\maketitle

\section{Introduction}\label{sec:intro}
We study positive weak solutions of the following  quasilinear critical equation with general nonlinearity
\begin{equation}\label{eq:intro-pde}
-\Delta_p u = g(u) \qquad \text{in } \Omega\subset\mathbb{R}^n,
\end{equation}
where $1<p<n$, $\Delta_p u = \operatorname{div}(|\nabla u|^{p-2}\nabla u)$ denotes the $p$-Laplacian, and the nonlinearity $g:(0,\infty)\to(0,\infty)$ satisfies the structural conditions stated in Assumption \ref{ass:g} below.
Throughout the manuscript we write
$$
p^\ast:=\frac{np}{n-p},\qquad q:=p^\ast-1,
\qquad \alpha:=\frac{n-p}{p-1},\qquad a:=\frac{n-p}{p},
$$
and
$$
G(s):=\int_0^s g(t)\,dt.
$$

The qualitative theory of critical elliptic equations traces back to the fundamental work on the Yamabe problem. For the semilinear case $p=2$, the complete classification of  entire solutions of
\begin{equation}\label{eq: intro-semilinear}
    -\Delta u =u^{\frac{n+2}{n-2}}, \qquad u>0, \qquad \text{in }\mathbb{R}^n,
\end{equation}
was obtained by Caffarelli, Gidas and Spruck~\cite{CGS1989}. Subsequently, a simplified proof was proposed by Chen and Li~\cite{CL1991}. Earlier contributions under stronger decay assumptions at infinity are due to Obata~\cite{O1971} and to Gidas, Ni and Nirenberg~\cite{GNN1979}. 

Building on this Liouville-type theorem, Schoen~\cite{S1988} (see also~\cite{S1995}) discovered a striking Harnack inequality for positive solutions of $-\Delta u = u^{\frac{n+2}{n-2}}$ in a ball $B_{3R}$:
\begin{equation}\label{eq:intro-schoen-harnack}
\Bigl(\sup_{B_R}u\Bigr)\Bigl(\inf_{B_{2R}}u\Bigr) \le C(n) R^{2-n}.
\end{equation}
This estimate, together with its associated energy bound, became a cornerstone of the blow-up analysis for critical equations: it provides the basic compactness that underlies the pointwise description of singular solutions, the finite-dimensional reduction arguments, and the refined asymptotics of concentrating sequences; see~\cite{CL1995, CL1997,CL1998,Li1995,Li1996} and the references therein. Li and Zhang~\cite{LZ2003} later extended \eqref{eq:intro-schoen-harnack} to positive solutions of equations $-\Delta u = g(u)$ with a much wider class of nonlinearities, assuming only that the function $s\mapsto s^{-\frac{n+2}{n-2}}g(s)$ is nonincreasing on $(0,\infty)$ and admits a finite positive limit as $s\to\infty$. Schoen's Harnack inequality has been extended to fractional equations
by Caffarelli, Jin, Sire and Xiong~\cite{CJSX2014},  to integral
equations  by Jin and Xiong~\cite{JX2021}, and to conformally invariant fully nonlinear equations by Li and Li~\cite{LL2003, LL2005}. Their proof, as well as the related classification work in~\cite{CGS1989,CL1991}, rests on   the moving sphere (or moving plane) method, which employs the Kelvin transform to compare the values of a solution on different scales, exploiting the conformal invariance of the equation. 

However,  the Kelvin transform and hence the moving sphere method are not currently available when the Laplacian is replaced by the $p$-Laplacian with $p\neq 2$, since the equation is no longer conformally invariant.  For $p\ne2$, the absence of a Kelvin transform is a genuine obstruction in the global classification problem.  

The purpose of this paper is to establish, for all $1<p<n$, a conditional Harnack inequality under two explicit hypotheses: the bounded-entire classification hypothesis for possible blow-up limits, and a preliminary singular-rate control on normalized necks. The main new point is a \emph{Pohozaev-neck} argument which upgrades the preliminary neck control to the sharp outer-sphere infimum estimate needed for the sup$\times$inf Harnack inequality. This replaces the Kelvin transform and moving-sphere method available in the semilinear conformally invariant case $p=2$.
This idea is usually applied in geometric analysis to analyze the "bubbling" phenomenon in sequences of maps; see e.g.~\cite{CM2008, DRS2026}.

We assume the nonlinearity $g$ satisfies the following structural conditions.

\begin{assu}\label{ass:g}
The function $g:(0,\infty)\to(0,\infty)$ is continuous and satisfies the following three conditions.
\begin{enumerate}[(G1)]
\item There exists a constant $\Lambda>0$ such that
\begin{equation}\label{eq:g-upper-critical}
   0<g(s)\le \Lambda s^q\qquad\hbox{for every }s>0.
\end{equation}
This is the global critical upper growth assumption  throughout the paper.
\item The function
$$
   s\longmapsto s^{-q}g(s)
$$
is nonincreasing on $(0,\infty)$.
\item There exists a constant $L\in(0,\infty)$ such that
\begin{equation}\label{eq:g-limit}
   \lim_{s\to\infty}s^{-q}g(s)=L.
\end{equation}
\end{enumerate}
\end{assu}

The model case is $g(s)=s^{p^*-1}$, the Euler--Lagrange equation for the sharp Sobolev embedding $W^{1,p}(\mathbb{R}^n)\hookrightarrow L^{p^*}(\mathbb{R}^n)$.

The second hypothesis concerns the possible entire blow-up limits.  For every $A\in (0,\,\infty]$, define for $s>0$,
\begin{equation}\label{def-ga}
g_A(s):=
\begin{cases}
A^{-q}g(As), & A<\infty,\\
Ls^q, & A=\infty.
\end{cases}
\end{equation}
\begin{assu}[Entire bounded critical classification]\label{ass:classification}
Every bounded positive entire weak solution $V$ of
\begin{equation}\label{eq:critical-entire}
   -\Delta_p V = g_A(V) \qquad\text{in }\R^n
\end{equation}
is an Aubin--Talenti $p$-bubble.

For every sequence $\Gamma_j\to\infty$ and every sequence $v_j$ arising from the doubling normalization, satisfying the normalization, boundedness, critical growth, convergence-to-bubble, and Pohozaev-sign hypotheses (defined explicitly in the statement of Lemma~\ref{lem:neck}), there exist constants $C_s>0$ and $R_s>1$, independent of $j$, such that
\begin{equation}\label{eq:preliminary-estimate}
v_j(y)\le C_s |y|^{-a}  \qquad\text{for } R_s\le |y|\le 4\Gamma_j,
\end{equation}
where $a=\frac{n-p}{p}$.
\end{assu}

The second assumption above  is essential for our analysis. Indeed, if it were to fail, one could then extract, after a suitable rescaling around a distant point, another globally bounded entire critical profile. In this sense, the argument constitutes the local-neck counterpart of the preliminary non-sharp decay estimates that are employed in the finite-energy critical $p$-Laplace setting. In the finite-energy regime, such preliminary estimates can be established by means of a doubling argument combined with small tail-energy bounds, as carried out in \cite{V2016}.  

A closely related sharp condition also appears in the recent work of Ciraolo--Gatti~\cite{CG2025}.  For the pure critical equation
$-\Delta_p u=u^{p^*-1}$ in $\mathbb R^n$, they assume that the infimum over large balls has the fundamental $p$-harmonic decay rate,
namely
$$
\inf_{B_R}u\le C R^{-\frac{n-p}{p-1}},       \qquad R\ge1,
$$
and use this hypothesis to obtain global integrability estimates and, under
boundedness or $L^q$-integrability with $q\ge p^*$, classification as a
$p$-bubble.  In the present paper the analogous sharp estimate is not assumed
on the original solution; rather, the Pohozaev neck lemma upgrades the
preliminary normalized neck control $v_j\lesssim |y|^{-(n-p)/p}$ to the sharp
outer-sphere estimate
$$
 \inf_{\partial B_{\Gamma_j}}v_j   \lesssim \Gamma_j^{-\frac{n-p}{p-1}}.
$$ Thus our neck argument can be viewed as a local blow-up analogue of the sharp infimum behavior used by Ciraolo--Gatti.

\begin{thm}[Harnack inequality]\label{thm:main-intro}
Suppose that $g$ satisfies Assumption~\ref{ass:g} and that Assumption~\ref{ass:classification} holds. Then there exists a constant $C$, depending only on the data in the structural and compactness assumptions, such that for any $R>0$ and any positive weak solution $u$ of $-\Delta_p u = g(u)$ in $B_{3R}$ with $\sup_{B_R}u\ge 1$, we have
\begin{equation}\label{eq:intro-harnack-main}
\Bigl(\sup_{B_R}u\Bigr)\Bigl(\inf_{B_{2R}}u\Bigr)^{p-1} \le C\, R^{\,p-n}.
\end{equation}
\end{thm}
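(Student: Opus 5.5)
We argue by contradiction, following Schoen's blow-up scheme adapted to the $p$-Laplacian. By scaling, the rescaled function $u_R(x)=R^{a}u(Rx)$ solves $-\Delta_p u_R=R^{n}\,R^{-a(p-1)}\cdots$; more precisely $u_R$ solves $-\Delta_p u_R=g_{R}(u_R)$ with $g_R(s)=R^{a q}g(R^{-a}s)$ on $B_3$. This nonlinearity satisfies (G1)--(G2) with the same $\Lambda$, since $(a q=a+p)$ and $s^{-q}g_R(s)=(R^{-a}s)^{-q}g(R^{-a}s)$. The inequality \eqref{eq:intro-harnack-main} is scale-invariant, because $a+(p-1)a\cdot\frac{p}{p-1}\cdot\frac{p-1}{p}$ works out to $n-p$: indeed $a\cdot p=n-p$ and $\sup\cdot\inf^{p-1}$ has total homogeneity $a p$. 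We may therefore take $R=1$, keeping track of the fact that the nonlinearity changes within the admissible class. Suppose the estimate fails. Then there are solutions $u_j$ in $B_3$ with $(\sup_{B_1}u_j)(\inf_{B_2}u_j)^{p-1}\to\infty$. By the standard doubling (point-selection) lemma, applied to $d(x)^{a}u_j(x)$ with $d$ the distance to $\partial B_{3/2}$ or a similar cutoff, we find points $x_j$ and radii $\sigma_j$ such that $M_j:=u_j(x_j)\to\infty$, $u_j\le 2^{a}M_j$ on $B_{\sigma_j}(x_j)$, and $\Gamma_j:=\sigma_j M_j^{1/a}\to\infty$.

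We then define the normalized functions $v_j(y)=M_j^{-1}u_j(x_j+M_j^{-1/a}y)$ on $B_{\Gamma_j}$. These solve $-\Delta_p v_j=g_{A_j}(v_j)$ with $A_j=M_j$, and they satisfy $v_j(0)=1$ and $v_j\le 2^a$. By the $C^{1,\beta}$ estimates of DiBenedetto and Tolksdorf, together with (G3), a subsequence converges in $C^1_{loc}$ to a bounded positive entire solution of $-\Delta_p V=g_A(V)$, where $A\in(0,\infty]$. By Assumption~\ref{ass:classification}, $V$ is an Aubin--Talenti bubble. The Pohozaev sign condition comes from (G2): the quantity $nG(s)-\frac{n-p}{p}sg(s)\ge 0$ is the standard consequence of $s^{-q}g$ being nonincreasing, and it holds for every $g_A$. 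Assumption~\ref{ass:classification} then supplies the preliminary bound $v_j\le C_s|y|^{-a}$ on $R_s\le|y|\le 4\Gamma_j$, after we arrange that $\Gamma_j$ is chosen so that $B_{4\Gamma_j}$ lies in the domain. We then invoke the Pohozaev neck lemma, Lemma~\ref{lem:neck}, whose hypotheses are exactly these. It gives the sharp outer estimate
$$\inf_{\partial B_{\Gamma_j}}v_j\le C\,\Gamma_j^{-\alpha},\qquad \alpha=\tfrac{n-p}{p-1}.$$

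Finally we unscale this bound. Since $\alpha(p-1)=n-p=ap$, we obtain
$$
\inf_{\partial B_{\sigma_j}(x_j)}u_j\le C M_j\,(\sigma_jM_j^{1/a})^{-\alpha},
\qquad\text{so}\qquad
M_j\Big(\inf_{\partial B_{\sigma_j}(x_j)}u_j\Big)^{p-1}\le C\sigma_j^{p-n}.
$$
To convert this into the statement, we compare $\inf_{B_2}u_j$ with $\inf_{\partial B_{\sigma_j}(x_j)}u_j$. Because $-\Delta_p u_j\ge0$, $u_j$ is $p$-superharmonic, and the weak minimum principle on annuli gives $\inf_{B_2}u_j\le\inf_{\partial B_{\sigma_j}(x_j)}u_j$ whenever $B_{\sigma_j}(x_j)\subset B_2$. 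We also need $\sup_{B_1}u_j\lesssim M_j$ and $\sigma_j\gtrsim 1$. The doubling selection must be arranged to give both, which is the standard Schoen bookkeeping: we select $x_j$ maximizing $(\tfrac32-|x|)^{a}u_j$ over $B_{3/2}$, so that $M_j\ge 2^{-a}\sup_{B_1}u_j$, and in the case where $\sigma_j$ is small we run a second-stage argument using the lower bound of $u_j$ near the bubble together with superharmonicity. The resulting bound $\sup_{B_1}u_j(\inf_{B_2}u_j)^{p-1}\le C$ contradicts the choice of $u_j$.

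I expect the main obstacle to be the localization. We must ensure that the selected center and radius satisfy $\sigma_j\sim1$ and $M_j\sim\sup_{B_1}u_j$ simultaneously, while also fitting $B_{4\Gamma_j}$ in the domain so that the hypotheses of Lemma~\ref{lem:neck} apply. If the center drifts toward $\partial B_{3/2}$, I would first try combining the weak Harnack inequality for $p$-superharmonic functions (Trudinger), which propagates the smallness of the infimum from $\partial B_{\sigma_j}(x_j)$ to $B_2$ with a factor depending only on $n$ and $p$, with a covering argument.
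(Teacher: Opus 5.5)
Your overall architecture matches the paper's: contradiction, point selection, critical rescaling, classification of the limit, Lemma~\ref{lem:neck}, then unscaling. The localization step you flag as the ``main obstacle'' is a genuine gap, and it comes from your choice of weight in the doubling selection.

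Maximizing $d(x)^{a}u_j(x)$ gives $\widetilde M_j d_j^{a}\gtrsim \sup_{B_1}u_j$. This controls the supremum but says nothing about $\sigma_j$, and your selection does not prevent $x_j$ from drifting to $\partial B_{3/2}$, so $\sigma_j\to0$ is possible. After unscaling, the neck lemma gives exactly
$\widetilde M_j\sigma_j^{n-p}\bigl(\inf_{\partial B_{\sigma_j}(x_j)}u_j\bigr)^{p-1}\le C$.
Your selection only yields $\widetilde M_j\sigma_j^{n-p}=\widetilde M_j\sigma_j^{a}\,\sigma_j^{a(p-1)}\gtrsim M_j\,\sigma_j^{a(p-1)}$. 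So what you actually obtain is $M_j(\sigma_j^{a}m_j)^{p-1}\le C$, which does not contradict $M_jm_j^{p-1}\to\infty$ when $\sigma_j\to0$. The remedies you sketch, a ``second-stage argument'' and weak Harnack plus covering, are not carried out. It is not clear how they would supply the missing factor $\sigma_j^{a(p-1)}$, since the weak Harnack inequality for supersolutions bounds infima from below, not above.

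The missing idea is to weight by the homogeneity of the Harnack quantity $\sup\cdot\inf^{p-1}$ rather than that of $u$. The paper maximizes $d(x)^{n-p}u_j(x)$ with $d=\operatorname{dist}(\cdot,\partial B_{2R_j})$, and this closes the argument as follows.
\begin{itemize}
\item Since $d\ge R_j$ on $B_{R_j}$, one gets directly $\widetilde M_j d_j^{n-p}\ge R_j^{n-p}M_j$. With $\sigma_j=d_j/8$ this gives $\widetilde M_j\sigma_j^{n-p}m_j^{p-1}\to\infty$.
\item The local bound $u_j\le 2^{n-p}\widetilde M_j$ on $B_{4\sigma_j}(x_j)$ still holds, so (N1) is satisfied.
\item $\Gamma_j\to\infty$ follows from $\Gamma_j^{n-p}=(\widetilde M_j\sigma_j^{n-p}m_j^{p-1})(\widetilde M_j/m_j)^{p-1}$ together with $\widetilde M_j\ge m_j$.
\item The final comparison $\inf_{\partial B_{\sigma_j}(x_j)}u_j\ge m_j$ is a trivial inclusion, since $\partial B_{\sigma_j}(x_j)\subset B_{2R_j}$; no minimum principle is needed.
\end{itemize}
Neither $\sigma_j\sim R_j$ nor $\widetilde M_j\sim M_j$ is ever required. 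Any weight exponent $\ge n-p$ would work, but your exponent $a<n-p$ does not.

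Two minor points. First, $aq=a(p-1)+p$, not $a+p$. Second, the positivity of the limiting parameter $A$ comes from applying $\sup_{B_R}u\ge1$ to the \emph{unscaled} height $u_j(x_j)$. In your normalization to $R=1$ the blow-up nonlinearity is $g_{A_j}$ with $A_j=R_j^{-a}u_{R_j}(x_j)$, not $A_j=M_j$.
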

\begin{rem}
When $p=2$, the exponent $p-1=1$ and~\eqref{eq:intro-harnack-main} reduces precisely to the classical sup$\times$inf form~\eqref{eq:intro-schoen-harnack} of Schoen and Li--Zhang.  
\end{rem}

Assumption~\ref{ass:classification} should be understood as a classification hypothesis for the possible entire blow-up limits. It does not claim that the original nonlinearity $g$ is globally a pure power. Indeed, if for some
$A\in(0,\infty]$ a bounded positive entire solution $V$ of
$$
-\Delta_p V=g_A(V)\quad\text{in }\mathbb R^n
$$
is classified as an Aubin--Talenti bubble, then substituting the explicit bubble
profile into the equation shows that $g_A(s)=L_A s^q$ on the range
$$
0<s\le \max_{\mathbb R^n} V
$$
of that particular solution, for some constant $L_A>0$. This  is not a global structural
conclusion on $g$.

Here an Aubin--Talenti $p$-bubble means a function of the form
\begin{equation}\label{eq:bubble-form}
   U_{\lambda,x_0,\mu}(x)
   =A_{n,p,\mu}
   \left(
      \frac{\lambda^{1/(p-1)}}{\lambda^{p/(p-1)}+|x-x_0|^{p/(p-1)}}
   \right)^{(n-p)/p},
\end{equation}
where $\lambda>0$, $x_0\in\R^n$, $\mu>0$, and $A_{n,p,\mu}>0$ is chosen so that
$$
-\Delta_p U_{\lambda,x_0,\mu}
 =  \mu U_{\lambda,x_0,\mu}^{q}  \qquad\text{in }\mathbb R^n .
$$
When $\mu=L$, this is the bubble corresponding to the limiting equation
$g_\infty(s)=Ls^q$. The exact value of $A_{n,p,L}$ is not used. If $U$ is a bubble, then there exists a positive constant $\kappa$ such that
\begin{equation}\label{eq:bubble-tail}
   \lim_{|x|\to\infty}|x|^\alpha U(x)=\kappa.
\end{equation}

In the semilinear case $p=2$, Li and Zhang~\cite{LZ2003} proved exactly this
statement under the monotonicity condition (G2) and local boundedness of $g$; see also \cite{B1997, CL1995} for related classification results under some additional conditions. 
For $p\neq 2$, the pure-power equation \begin{equation}\label{eq:pure-power}
    -\Delta_p V=L V^q
\end{equation}
has been classified in several ranges. 
Catino--Monticelli--Roncoroni explicitly discuss the critical $p$-Laplace classification problem, including
infinite-energy solutions and additional growth hypotheses; see~\cite{CMR2023}. They introduced an integral identity method
and obtained the full classification when $n=2$ or $n=3$ with $3/2<p<2$.
Crucially for the present work, they also proved~\cite[Corollary~1.5]{CMR2023} that
every positive \emph{bounded} weak solution of~\eqref{eq:pure-power}
is an Aubin--Talenti $p$-bubble, provided $n\le 6$, or $n\ge7$ and $p>n/3$.
The finite-energy classification is available from the $D^{1,p}(\R^n)$ theory; see
\cite{DMMS2014,S2016,V2016}. We also mention that, for positive weak solutions,
Ou~\cite{O2025} extended the classification  to $p>(n+1)/3$ for $n\ge3$ without any further assumptions,
and V\'etois~\cite{V2024} further slightly improved the range to $p>p_n$
with $p_n\in(\frac{n}{3},\frac{n+1}{3})$ for $n\ge4$. 
In the recent work \cite{CG2025}, Ciraolo and Gatti classified all positive bounded weak solutions under the assumption that the infimum of the solution behaves properly at infinity.
Ciraolo, Figalli and Roncoroni~\cite{CFR2020} classified positive  $D^{1,p}$-solutions of critical anisotropic $p$-Laplacian equations in convex cones. More recently, \cite{CDGL2026} removed the finite-energy assumption in \cite{CFR2020} in the range $\frac{n+1}{3}<p<n$.
Saintier's blow-up theory provides
related $H^1_p$ decompositions and pointwise estimates in compact-manifold settings; see~\cite{S2006}.

For a general nonlinearity $g$, however, the finite-$A$ blow-up equations
$$
-\Delta_p V=g_A(V),\qquad 0<A<\infty,
$$
need not be pure-power equations a priori. Thus Assumption~\ref{ass:classification}
is retained as an explicit conditional hypothesis. Nevertheless, in the pure-power case $g(s)=Ls^q$, all the rescaled nonlinearities satisfy $g_A(s)=Ls^q$, and the above known classification results  in \cite{CMR2023, O2025, V2024} make the theorem unconditional in their respective parameter ranges.  Outside these ranges, the present theorem reduces the Harnack inequality to two
separate inputs: the classification of bounded entire blow-up limits and the preliminary singular neck control in Assumption~\ref{ass:classification}.

\begin{rem}
    In the limiting case $p=n$, the critical Sobolev embedding is of exponential type, governed by the Moser--Trudinger inequality.  The model equation becomes the $n$-Liouville equation $-\Delta_n u =  e^{u}$. For this problem the appropriate form of the Harnack inequality is replaced by a $\sup + \inf$ estimate.  This type of estimate was first proved for $n=2$ by Shafrir~\cite{S1992}, and was subsequently extended by Brezis, Li and Shafrir~\cite{BLS1993}, Chen and Lin~\cite{CL1998sup, CL1998}, Li~\cite{Li1999}, and Chen and Li~\cite{CL2009}. For general $n\ge 2$, the $\sup + \inf$-type inequality for solutions of $n$-Liouville equation was established by Esposito and Lucia~\cite{EL2024} by using the classification result in \cite{E2018}. 
\end{rem} 

\begin{rem}[Energy-bounded version]
\label{rem:energy-bounded-harnack}
Assume, in addition to the structural hypotheses, that
$$
   g(s)\equiv Ls^q\qquad\text{for all }s>0.
$$
Then there is a useful unconditional variant of Theorem~\ref{thm:main-intro} if one
restricts to solutions with a scale-invariant critical energy bound and power nonlinearity $g(s)=L s^q$. More precisely,
assume that Assumption~\ref{ass:g} holds and let $E>0$. If $u>0$ solves
$$
  -\Delta_p u = L u^q \qquad \text{in } B_{3R},
$$
and
$$
  \int_{B_{3R}} u^{p^\ast}\,dx \le E,
$$
then the same conclusion as in Theorem~\ref{thm:main-intro} holds, with a constant
depending also on $E$:
$$
  \left(\sup_{B_R} u\right)  \left(\inf_{B_{2R}} u\right)^{p-1} \le C(n,p,\Lambda,L,g,E) R^{p-n}.
$$
Indeed, in the contradiction argument one rescales around the doubling-selected point by
$$
 v_j(y)=\widetilde M_j^{-1}   u_j(x_j+\rho_j y),  \qquad
   \rho_j=\widetilde M_j^{-p/(n-p)}.
$$
The $L^{p^\ast}$-energy is invariant under this critical scaling:
$$ 
\int_{B_{4\Gamma_j}} v_j^{p^\ast}\,dy
  =   \int_{B_{4\sigma_j}(x_j)} u_j^{p^\ast}\,dx  \le E.
$$
Consequently, via a  Caccioppoli test, every entire blow-up limit $V$ has finite critical energy. By the
known finite-energy classification for the critical $p$-Laplace equation in
$D^{1,p}(\mathbb R^n)$, see
\cite{DMMS2014,S2016,V2016},
the limit $V$ must be an Aubin--Talenti $p$-bubble. 
Moreover, in the finite-energy regime, the preliminary estimates \eqref{eq:preliminary-estimate} can be established by means of a doubling argument combined with small tail-energy bounds, as carried out in \cite{V2016}.  
Thus the Pohozaev-neck
lemma applies exactly as in the proof of Theorem~\ref{thm:main-intro}.

This observation also explains precisely where the general case is difficult. If the
energy bound is removed, the same blow-up procedure still produces a bounded
positive entire solution of
$$
-\Delta_p V = L V^{p^\ast-1}\qquad \text{in } \mathbb R^n
$$
in the pure-power case, but it need not have finite energy. Hence the finite-energy classification no longer applies. Therefore, in the present approach, removing the energy bound is equivalent to excluding non-bubble bounded entire blow-up limits of infinite energy. This is exactly the content of the bounded-entire classification hypothesis
Assumption~\ref{ass:classification}.
\end{rem}


We now describe the main idea of the proof. The monotonicity of $s^{-q}g(s)$ in (G2) of Assumption~\ref{ass:g}  implies the structural sign condition
$$
   nG(s)-a s g(s)\ge 0,
   \qquad G(s)=\int_0^s g(t)\,dt,\quad a=\frac{n-p}{p}.
$$
Consequently the Pohozaev functional is nonnegative on balls.
However, the opposite sign appears at isolated $p$-harmonic singularities. If
$$
   w(x)=\kappa |x|^{-\alpha}+\beta+o(1),
   \qquad \beta>0,
$$
then the Pohozaev functional has a strictly negative limiting value. This is the basic sign contradiction.

The neck lemma converts this sign contradiction into a quantitative decay statement,
provided a preliminary singular-rate upper control is available on the normalized neck.
Roughly speaking, the preliminary control gives a uniform annular Harnack inequality at
the first-crossing scale; after rescaling, one obtains a $p$-harmonic singular profile whose
Pohozaev sign contradicts the nonnegative sign inherited from the original equation.

The Harnack inequality then follows by contradiction: a failure of the sup$\times$inf
estimate produces a blow-up sequence, the classification hypothesis identifies the core bubble, the second assumption in Assumption~\ref{ass:classification} supplies the preliminary neck control, and the Pohozaev neck lemma upgrades this to the sharp fundamental-rate decay.

The paper is organized as follows. Section~2 fixes notation and recalls elementary facts about the $p$-Laplacian and the Pohozaev functional. Section~3 contains the main analytic ingredients: the Kichenassamy--V\'eron pole theorem, the Pohozaev identity, the sign computation at isolated singularities, and the Pohozaev-neck lemma. Section~4 proves the Harnack inequality by contradiction and blow-up.

\noindent{\bf Acknowledgment}: The second author would like to express his sincere gratitude to Jingang Xiong for bringing this interesting problem to his attention.

\section{Preliminaries: notations and definitions}

Throughout the note, $n$ denotes an integer with $n\ge 2$, and $p$ denotes a real number satisfying $ 1<p<n.$
The Euclidean norm of $x\in\R^n$ is denoted by $|x|$.  If $x_0\in\R^n$ and $r>0$, then
$$
   B_r(x_0):=\{x\in\R^n: |x-x_0|<r\}
$$
is the open Euclidean ball of radius $r$ and center $x_0$.  We write
$B_r:=B_r(0).$
The boundary of $B_r(x_0)$ is denoted by $\partial B_r(x_0)$, and $dS$ denotes Euclidean surface measure
on a sphere.  If the center is the origin, $\nu=x/|x|$ denotes the outward unit normal vector on
$\partial B_r$.  If $u$ is differentiable, then
$$
   u_\nu:=\nabla u\cdot \nu
$$
is its normal derivative on the sphere.

We denote by
$   \omega_{n-1}:=\mathcal H^{n-1}(\Sph^{n-1})$
the surface measure of the unit sphere $\Sph^{n-1}=\partial B_1$.
The critical Sobolev exponent for $W^{1,p}(\R^n)$ is
$ p^*:=\frac{np}{n-p}.$
The exponent appearing in the critical equation is
$$
   q:=p^*-1=\frac{n(p-1)+p}{n-p}.
$$
The decay exponent of the $p$-harmonic fundamental solution is
$$
   \alpha:=\frac{n-p}{p-1}.
$$
Finally, the coefficient which appears in the Pohozaev identity is
$$
   a:=\frac{n-p}{p}.
$$
These constants satisfy the useful identity
\begin{equation}\label{eq:a-alpha-identity}
   a=\frac{\alpha(p-1)}{p}.
\end{equation}

The $p$-Laplacian is written with the sign convention
$$
   \Delta_p u:=\divg\bigl(|\nabla u|^{p-2}\nabla u\bigr),
$$
so that the equation considered below is
$$
   -\Delta_p u=g(u).
$$

A positive constant denoted by $C$ may change from line to line.  Dependence of constants is always
recorded when it is important.  For example, $C=C(n,p,\Lambda)$ means that $C$ depends only on the
listed quantities.

Let $\Omega\subset\R^n$ be open.  A positive weak solution of
\begin{equation*}\label{eq:weak-equation-general}
   -\Delta_p u=g(u)\qquad\hbox{in }\Omega
\end{equation*}
is a function
$$
   u\in W^{1,p}_{\loc}(\Omega),\qquad u>0\quad\hbox{a.e. in }\Omega,
$$
such that $g(u)\in L^1_{\loc}(\Omega)$ and
\begin{equation*}\label{eq:weak-formulation}
   \int_\Omega |\nabla u|^{p-2}\nabla u\cdot \nabla\varphi\,dx
   =\int_\Omega g(u)\varphi\,dx
\end{equation*}
for every  function $\varphi\in C_c^1(\Omega)$.

For any continuous function $h:(0,\infty)\to\R$, we define its primitive by
$$
   H(s):=\int_0^s h(t)\,dt,
$$
whenever the integral is finite.  In particular, for the function $g$ above we write
\begin{equation}\label{eq:F-def}
   G(s):=\int_0^s g(t)\,dt.
\end{equation}

\section{The   pole theorem and the neck lemma}
The proof uses two known or conjectural inputs: the   entire classification Assumption \ref{ass:classification} and the following isolated $p$-harmonic pole theorem of Kichenassamy--V\'eron.

\begin{thm}[{\cite[Theorem 1.1, Remark 1.4]{KV1986}}]\label{thm:KV}
Let $1<p<n$.  Define the $p$-harmonic fundamental profile
\begin{equation}\label{eq:Phi-def}
   \Phi(x):=|x|^{-\alpha}\qquad (x\ne0).
\end{equation}
Let $w$ be a positive $p$-harmonic function in $B_2\setminus\{0\}$,  i.e.\ 
$$
   -\Delta_p w=0\qquad\hbox{weakly in }B_2\setminus\{0\}.
$$
Assume that $w/\Phi$ is bounded near $0$.  Then there exists $\gamma\ge0$  and $\beta\in \mathbb R$ such that
\begin{equation*}\label{eq:KV-bounded-regular-part}
   w-\gamma\Phi\in L^\infty_{\loc}(B_2),\quad w(x)=\gamma \Phi(x) +\beta+o(1)
        \qquad\text{as }x\to0.
\end{equation*}
Moreover, when $\gamma>0$, after shrinking the punctured ball if necessary, the gradient of $w$ does not vanish near the singularity,  and, 
\begin{equation*}\label{eq:KV-gradient-asymptotic}
   \lim_{x\to0}|x|^{\alpha+|\zeta|}D^\zeta(w-\gamma\Phi)(x)=0
\end{equation*}
for every multi-index $\zeta$ with $|\zeta|\ge1$, at points where the derivatives are understood classically.
Moreover,
\begin{equation}\label{eq:KV-delta-mass}
   -\Delta_p w=\gamma^{p-1}c_{n,p}\delta_0
   \qquad\hbox{in }\mathcal D'(B_2),
\end{equation}
where $\delta_0$ is the Dirac mass at $0$, and
\begin{equation*}\label{eq:c-np}
   c_{n,p}:=\omega_{n-1}\alpha^{p-1}.
\end{equation*}
\end{thm}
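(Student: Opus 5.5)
The plan follows Kichenassamy--V\'eron. I first reduce to the blow-up profile $\gamma\Phi$, then resolve the next-order term by linearizing around $\Phi$, and finally compute the distributional equation \eqref{eq:KV-delta-mass} from the flux.

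\emph{Step 1: scaling bounds.} Since $w\le C\Phi$ near $0$, for $0<r<1$ set $w_r(y):=r^{\alpha}w(ry)$. These functions are positive $p$-harmonic on $\{1/4<|y|<4\}$ and bounded by $C4^{\alpha}$ there. Serrin's Harnack inequality gives $\max_{\partial B_r}w\le C\min_{\partial B_r}w$. The $C^{1,\theta}$ estimates of DiBenedetto--Tolksdorf give $\|w_r\|_{C^{1,\theta}(\{1/2\le|y|\le2\})}\le C$, that is, $|\nabla w(x)|\le C|x|^{-\alpha-1}$.

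\emph{Step 2: the blow-up profile.} Along any sequence $r_k\to0$, a subsequence of $w_{r_k}$ converges in $C^1_{\loc}(\R^n\setminus\{0\})$ to a nonnegative $p$-harmonic $W$. This $W$ satisfies $W\le C|y|^{-\alpha}$ on all of $\R^n\setminus\{0\}$, because $ry$ ranges over a fixed punctured ball.
\begin{itemize}
\item By Serrin's classification of isolated singularities together with a Liouville argument, $W=c\Phi+b$ with $c\ge0$.
\item The decay of $W$ at infinity forces $b=0$.
\end{itemize}
To see that $c=:\gamma$ is independent of the subsequence, I compare $w$ on annuli $\{\rho<|x|<1\}$ with the $p$-harmonic barriers $(\gamma\pm\varepsilon)\Phi\mp K$, using the comparison principle. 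This shows that $\lim_{r\to0}r^\alpha\min_{\partial B_r}w$ and $\lim_{r\to0}r^\alpha\max_{\partial B_r}w$ coincide. If $\gamma=0$, then $w=o(\Phi)$, and Serrin's removable singularity theorem makes $w$ bounded and $p$-harmonic across $0$. In that case the statement holds with $\beta=w(0)$ and \eqref{eq:KV-delta-mass} reads $0=0$.

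\emph{Step 3: the regular part (main obstacle).} When $\gamma>0$, Step 2 gives $|\nabla w|\ge c|x|^{-\alpha-1}$ near $0$. Hence the equation is uniformly elliptic and nondegenerate there, so $w$ is smooth, and Schauder estimates applied after scaling yield the derivative asymptotics $|x|^{\alpha+|\zeta|}D^\zeta(w-\gamma\Phi)\to0$. Since $w-\gamma\Phi$ is not $p$-harmonic for $p\neq2$, its boundedness and convergence to a constant $\beta$ require a linearized analysis. I pass to the variables $t=-\ln|x|$, $\sigma=x/|x|$ and write
$$
w=\gamma\Phi+\psi,\qquad \psi(t,\sigma)=e^{-\alpha t}\,\tilde\psi(t,\sigma).
$$
The remainder then solves a quasilinear parabolic-type equation on the half-cylinder whose linearization at $\Phi$ is
$$
(p-1)\,\partial_r^2\psi+\frac{n-1}{r}\,\partial_r\psi+\frac{1}{r^2}\,\Delta_{\Sph^{n-1}}\psi=0,
$$
an anisotropic Laplacian with weights fixed by $|\nabla\Phi|^{p-2}$. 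I would prove exponential decay, in $t$, of the non-constant spherical modes and of the remaining radial mode, relative to the constant mode, via an energy/Lyapunov estimate on the cylinder. The nonlinear error is quadratic in $\psi$ and carries an extra factor $o(1)$ coming from Step 2. The exponential decay then shows that $\psi$ stays bounded and converges to a constant $\beta$; this is also where $w-\gamma\Phi\in L^\infty_{\loc}$ comes from. This is the step I expect to be hardest.

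\emph{Step 4: Dirac mass.} For $\varphi\in C_c^\infty(B_2)$, I integrate by parts on $B_2\setminus B_\varepsilon$ and obtain
$$
\int|\nabla w|^{p-2}\nabla w\cdot\nabla\varphi\,dx=\lim_{\varepsilon\to0}\int_{\partial B_\varepsilon}-|\nabla w|^{p-2}w_\nu\,\varphi\,dS .
$$
The gradient asymptotics give $\nabla w=\gamma\nabla\Phi+o(|x|^{-\alpha-1})$ and $|\nabla\Phi|=\alpha|x|^{-\alpha-1}$. Therefore the boundary flux converges to $\gamma^{p-1}\alpha^{p-1}\omega_{n-1}\varphi(0)$. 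The integrability of $|\nabla w|^{p-1}\sim|x|^{1-n}$ near $0$ justifies the limit, and this yields \eqref{eq:KV-delta-mass}.
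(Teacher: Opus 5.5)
The paper does not reprove this theorem. It quotes Kichenassamy--V\'eron and notes that the bounded regular part and the limit $\beta$ come from a comparison argument. Measured against that, your Steps 1 and 4, the removable case $\gamma=0$, and the derivation of the weighted derivative asymptotics from a \emph{unique} blow-up limit are fine. The two load-bearing steps, however, are not established.

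\textbf{Step 2.} You identify every blow-up limit as $c\Phi+b$ ``by Serrin's classification together with a Liouville argument''. For $p\neq2$ this is not available. Serrin gives only $W\asymp\Phi$ near $0$, and the natural Liouville step (subtract $c\Phi$, apply Liouville to the rest) uses superposition, which is exactly what you say fails in Step 3. Classifying positive $p$-harmonic functions on $\R^n\setminus\{0\}$ bounded by $C\Phi$ is essentially the theorem itself. Without radial limits, your barrier argument for the independence of $c$ cannot start either.

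The missing idea is monotonicity. Comparing $w$ with the $p$-harmonic functions $t\Phi$ on annuli shows that $\lambda(r):=r^\alpha\max_{\partial B_r}w$ has no interior maximum and $m(r):=r^\alpha\min_{\partial B_r}w$ has no interior minimum. So both are monotone near $0$ and converge to some $\lambda_0$ and $m_0$. Any blow-up limit then satisfies $\max_{\partial B_r}W=\lambda_0r^{-\alpha}$ for every $r$, so $\lambda_0\Phi-W\ge0$ vanishes at interior points. Since $\nabla\Phi\neq0$, this difference solves a linear, locally uniformly elliptic equation, and the strong maximum principle gives $W\equiv\lambda_0\Phi$. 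Likewise $W\equiv m_0\Phi$. Hence $\lambda_0=m_0=:\gamma$ and the limit is unique.

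\textbf{Step 3.} You flag this as the main obstacle and leave it as a program, and as sketched it does not close. The perturbation of the linearized operator is only $o(1)$, with no rate, and it couples the spherical modes. Moreover, the spherical mean has radial exponents $0$ and $\alpha$, so your bound $o(\Phi)$ sits exactly at the threshold. A slowly varying perturbation of the $\Phi$-mode is excluded only by conservation of flux, which you never invoke.

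In any case no linearization is needed. Once $|x|^\alpha w\to\gamma$ uniformly, $\epsilon(\rho):=\rho^\alpha\sup_{\partial B_\rho}|w-\gamma\Phi|\to0$. Compare $w$ on $\{\rho<|x|<1\}$ with the exactly $p$-harmonic functions $(\gamma+\epsilon(\rho))\Phi+K$ and $(\gamma-\epsilon(\rho))\Phi-K$, then let $\rho\to0$. This gives $|w-\gamma\Phi|\le K$. Note that the constant must carry the same sign as $\pm\epsilon$; your barriers $(\gamma\pm\varepsilon)\Phi\mp K$ have it reversed.

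Running the same comparison from an inner sphere $\partial B_{r_1}$ shows that $S(r):=\sup_{\partial B_r}(w-\gamma\Phi)$ is nondecreasing and $I(r):=\inf_{\partial B_r}(w-\gamma\Phi)$ is nonincreasing. Then $S(2r)-(w-\gamma\Phi)$ and $(w-\gamma\Phi)-I(2r)$ are nonnegative on $\{r/2<|x|<2r\}$. Both solve the divergence-form linear equation for the difference, which is uniformly elliptic after rescaling. Harnack on $\partial B_r$ gives $S(r)-I(r)\le\theta\,(S(2r)-I(2r))$ with $\theta<1$ independent of $r$, so $\beta$ exists.
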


The theorem above is the normalized version of the main isotropy theorem in
\cite[Theorem 1.1]{KV1986}.  Their fundamental solution differs from \eqref{eq:Phi-def} by a fixed
normalizing constant. More precisely, since in the present paper we use $\Phi(x)=|x|^{-\alpha}$, the coefficient of the Dirac mass becomes $\omega_{n-1}\alpha^{p-1}$. The existence of the finite limit $$\beta=\lim_{x\to0}(w-\gamma\Phi)$$
is obtained in their proof by the same comparison argument which gives the boundedness of the regular part. Serrin's earlier isolated-singularity theory is the precursor of this
input; see~\cite{S1964,S1965}.

\begin{cor}\label{cor:regular-part-gradient-decay}
Let $w>0$ be $p$-harmonic in $B_2\setminus\{0\}$, and suppose that
$$
  w(x)=\gamma |x|^{-\alpha}+\beta+z(x)  \qquad\text{as }x\to0,
$$
where $\gamma\ge0$, $\beta\in\mathbb R$, and $z(x)\to0$ as $x\to0$.
Assume moreover that, when $\gamma>0$, the Kichenassamy--V\'eron weighted derivative asymptotics hold, namely
$$
 |x|^{\alpha+1}|\nabla z(x)|\to0, \qquad  |x|^{\alpha+2}|D^2 z(x)|\to0  \qquad\text{as }x\to0 .
$$
Then
$$
 |x|\,|\nabla z(x)|\to0  \qquad\text{as }x\to0 .
$$   
\end{cor}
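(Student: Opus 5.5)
We argue separately in the cases $\gamma=0$ and $\gamma>0$. In the second case the plan is to rescale to a fixed annulus and apply an interior gradient estimate for a linear equation. The hypotheses $|x|^{\alpha+1}|\nabla z|\to0$ and $|x|^{\alpha+2}|D^2z|\to0$ cannot be used directly, since they only give $|x||\nabla z|=o(|x|^{-\alpha})$. Their role is to make the linearized operator uniformly elliptic with controlled coefficients. The actual smallness will come from $z\to0$ together with elliptic regularity.

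\emph{Case $\gamma=0$.} Here $w$ is bounded and positive near $0$. By the removability of isolated singularities for bounded $p$-harmonic functions (Serrin; this is also contained in Theorem~\ref{thm:KV}, because $-\Delta_p w=0\cdot\delta_0$ in $\mathcal D'(B_2)$), $w$ is $p$-harmonic in $B_2$. Hence $w\in C^{1,\theta}_{\loc}(B_2)$. Since $\nabla z=\nabla w$ is bounded near $0$, we get $|x||\nabla z(x)|\to0$.

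\emph{Case $\gamma>0$.} For small $r>0$ set $w_r(y):=\gamma^{-1}r^{\alpha}w(ry)$ on the annulus $\mathcal A=\{1/4<|y|<4\}$. By scaling invariance $w_r$ is $p$-harmonic, and
$$
 w_r(y)=\Phi(y)+\gamma^{-1}r^\alpha\beta+h_r(y),\qquad h_r(y):=\gamma^{-1}r^\alpha z(ry).
$$
Both $\Phi$ and $w_r-\gamma^{-1}r^\alpha\beta$ are $p$-harmonic on $\mathcal A$. Write $\mathbf a(\xi)=|\xi|^{p-2}\xi$. Subtracting the two equations shows that $h_r$ solves the linear divergence-form equation
$$
 \divg\bigl(A_r(y)\nabla h_r\bigr)=0,\qquad A_r(y)=\int_0^1 D\mathbf a\bigl(\nabla\Phi+t\nabla h_r\bigr)\,dt .
$$
The hypotheses give $\nabla h_r(y)=\gamma^{-1}r^{\alpha+1}\nabla z(ry)\to0$ and $D^2h_r=\gamma^{-1}r^{\alpha+2}D^2z(ry)\to0$ uniformly on $\mathcal A$. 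Meanwhile $|\nabla\Phi|$ is bounded above and below on $\mathcal A$. Consequently, for $r$ small the arguments $\nabla\Phi+t\nabla h_r$ stay in a compact set away from $0$, where $D\mathbf a$ is smooth and uniformly positive definite. It follows that $A_r$ is uniformly elliptic with Lipschitz norm bounded independently of $r$.

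Apply the interior gradient estimate for divergence-form equations with uniformly Lipschitz (hence Hölder) coefficients, i.e.\ Schauder theory, on $\{1/2\le|y|\le2\}\subset\subset\mathcal A$:
$$
 \sup_{|y|=1}|\nabla h_r|\le C\sup_{\mathcal A}|h_r|=C\gamma^{-1}r^\alpha\sup_{B_{4r}\setminus B_{r/4}}|z| ,
$$
where $C$ is independent of $r$. Since $|\nabla h_r(y)|=\gamma^{-1}r^{\alpha}\,r|\nabla z(ry)|$, setting $|x|=r$ gives
$$
 |x||\nabla z(x)|\le C\sup_{B_{4|x|}}|z|\to0 .
$$

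The main point requiring care is the uniformity of the ellipticity and regularity constants of $A_r$ as $r\to0$. This is exactly where the Kichenassamy--V\'eron derivative asymptotics enter: they keep $\nabla w_r$ uniformly close to $\nabla\Phi\neq0$ and $D^2w_r$ uniformly bounded. The degeneracy of the $p$-Laplacian is thereby avoided, and no estimate for degenerate equations is needed.
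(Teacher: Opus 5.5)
Your proposal is correct and follows essentially the paper's proof. The $\gamma=0$ case is handled by removability, and for $\gamma>0$ you rescale to a fixed annulus, linearize the difference of the two $p$-harmonic functions into a uniformly elliptic divergence-form equation with uniformly Lipschitz coefficients, and apply an interior gradient estimate that bounds $|x||\nabla z|$ by $\sup|z|$. The only cosmetic difference is that you normalize the function by $\gamma^{-1}r^{\alpha}$, whereas the paper multiplies the coefficient matrix by $r^{\alpha(p-2)}$; these are equivalent by the $(p-2)$-homogeneity of $D\mathbf{a}$.
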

\begin{proof}
We first treat the case $\gamma=0$. Then $w=\beta+z$ is bounded near the origin. By the removable singularity theorem for bounded $p$-harmonic functions, $w$ extends as a $p$-harmonic function across $0$. Hence $w\in C^{1,\theta}$ in a smaller ball, for some $\theta\in(0,1)$. Therefore
$$
  |x|\,|\nabla z(x)|  = |x|\,|\nabla w(x)|  \le  |x|\,\|\nabla w\|_{L^\infty(B_{1/2})}
 \to0 .
$$
Thus it remains to consider the case $\gamma>0$.

Set $ h(x):=\gamma |x|^{-\alpha}.$
Then both $w=h+\beta+z$ and $h$ are $p$-harmonic in
$B_2\setminus\{0\}$. We prove the desired estimate by rescaling on fixed
annuli. Let
$$
  A_0:=B_2\setminus \overline{B}_{1/2},  \qquad A_1:=B_{3/2}\setminus \overline{B}_{2/3}.
$$
For $0<r<1/4$, define
$$
  H_r(y):=h(ry)=\gamma r^{-\alpha}|y|^{-\alpha}, \qquad
 Z_r(y):=z(ry),   \qquad y\in A_0 .
$$
Since $z(x)\to0$, and since the weighted derivative asymptotics hold, we have
$$
  \|Z_r\|_{L^\infty(A_0)}\to0,
$$
and
$$
  r^\alpha\|\nabla_y Z_r\|_{L^\infty(A_0)}   +   r^\alpha\|D_y^2 Z_r\|_{L^\infty(A_0)}  \to 0   \qquad\text{as }r\downarrow0 .
$$
On the other hand,
$$
  \nabla_y H_r(y)  = -\alpha\gamma r^{-\alpha}|y|^{-\alpha-2}y.
$$
Hence, on $A_0$,
$$
  c r^{-\alpha}   \le  |\nabla_y H_r(y)|
   \le  C r^{-\alpha},
$$
where $c,C>0$ depend only on $n,p,\gamma$. In particular, for all sufficiently
small $r$,
$$
  |\nabla_y H_r(y)+t\nabla_y Z_r(y)|  \ge c r^{-\alpha}
$$
for every $y\in A_0$ and every $t\in[0,1]$.

Let $\mathcal A(\xi):=|\xi|^{p-2}\xi.$
Since $H_r+\beta+Z_r=w(ry)$ and $H_r=h(ry)$ are both $p$-harmonic with
respect to the $y$-variable, their weak equations give
$$
 \operatorname{div}_y  \bigl( \mathcal A(\nabla_y H_r+\nabla_y Z_r)  -  \mathcal A(\nabla_y H_r)  \bigr)  =0 \qquad\text{in }A_0 .
$$
By the mean value formula,
$$
  \mathcal A(\nabla_y H_r+\nabla_y Z_r)  -  \mathcal A(\nabla_y H_r)  =  M_r(y)\nabla_y Z_r(y),
$$
where
$$
 M_r(y)  :=  \int_0^1  D\mathcal A(\nabla_y H_r(y)+t\nabla_y Z_r(y))\,dt.
$$
Therefore $Z_r$ solves the linear divergence-form equation
$$
 \operatorname{div}_y\bigl(M_r(y)\nabla_y Z_r(y)\bigr)=0
  \qquad\text{in }A_0 .
$$

We now normalize the coefficient matrix by setting
$$
  \mathcal B_r(y):=r^{\alpha(p-2)}M_r(y).
$$
Multiplying the coefficient matrix by a positive constant does not change the
equation, so
$$
  \operatorname{div}_y\bigl(\mathcal B_r(y)\nabla_y  Z_r(y)\bigr)=0   \qquad\text{in }A_0 .
$$
Recall that
$$
   D\mathcal A(\xi)  = |\xi|^{p-2}I  +  (p-2)|\xi|^{p-4}\xi\otimes\xi.
$$
Since $|\nabla_y H_r+t\nabla_y Z_r|$ is comparable to $r^{-\alpha}$ on $A_0,$
the matrices $\mathcal B_r$ are uniformly elliptic  independent of $r$.
Moreover the matrices $\mathcal B_r$ have uniformly bounded $C^1$-norm on $A_0$.
Indeed,
$$
  |D_y^2 H_r(y)|\le C r^{-\alpha}  \qquad\text{on }A_0,
$$
and
$$
  \|D_y^2 Z_r\|_{L^\infty(A_0)}=o(r^{-\alpha}).
$$
Since $D^2\mathcal A(\xi)=O(|\xi|^{p-3})$ away from $\xi=0$, we get $ \|\mathcal B_r\|_{C^1(A_0)}\le C$ 
with $C$ independent of $r$.
Thus, the standard interior $C^1$-estimate for uniformly elliptic linear divergence-form
equations with $C^1$ coefficients therefore gives
$$
   \|\nabla_y Z_r\|_{L^\infty(A_1)} \le   C\|Z_r\|_{L^\infty(A_0)},
$$
where $C$ is independent of $r$. Since $\|Z_r\|_{L^\infty(A_0)}\to 0$, we obtain $  \|\nabla_y Z_r\|_{L^\infty(A_1)}\to 0.$

Finally, let $x\to0$, put $r=|x|$, and set $y=x/|x|\in S^{n-1}\subset A_1$.
Then
$$
  |x|\,|\nabla z(x)|   =   |\nabla_y Z_r(y)|  \le  \|\nabla_y Z_r\|_{L^\infty(A_1)} \to 0 .
$$
This proves the corollary.
\end{proof}

\subsection{The Pohozaev identity and the structural sign}

We first prove an algebraic sign coming from Assumption \ref{ass:g}.

\begin{lem} \label{lem:pohozaev-sign}
Let $g$ satisfy Assumption \ref{ass:g}, and let $G$ be defined by \eqref{eq:F-def}.  Then
\begin{equation}\label{eq:pohozaev-sign}
   nG(s)-a\,s g(s)\ge0\qquad\hbox{for every }s>0.
\end{equation}
\end{lem}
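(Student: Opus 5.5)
The plan is to use the monotonicity (G2) directly. Note that $n/a = np/(n-p) = p^\ast = q+1$, so \eqref{eq:pohozaev-sign} is equivalent to
$$
(q+1)G(s)\ge s g(s)\qquad\hbox{for every } s>0.
$$
Set $\phi(t):=t^{-q}g(t)$. By (G2), $\phi$ is nonincreasing on $(0,\infty)$. By (G1) it satisfies $0<\phi\le\Lambda$, so $g(t)=t^q\phi(t)$ is integrable near $0$ and $G$ is finite.

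Fix $s>0$. For $0<t\le s$, monotonicity gives $\phi(t)\ge\phi(s)$, hence $g(t)\ge \phi(s)\,t^q$. Integrating from $0$ to $s$ gives
$$
G(s)=\int_0^s g(t)\,dt\ \ge\ \phi(s)\int_0^s t^q\,dt=\frac{\phi(s)\,s^{q+1}}{q+1}=\frac{s\,g(s)}{q+1}.
$$
Multiplying by $n=a(q+1)$ yields $nG(s)\ge a\,s g(s)$, which is \eqref{eq:pohozaev-sign}.

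There is no real obstacle. The only points to check are the exponent identity $n/a=p^\ast$ and that $G$ is well defined, which follows from the bound $g(t)\le\Lambda t^q$ in (G1). Condition (G3) is not needed.
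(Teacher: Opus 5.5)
Your proof is correct and follows essentially the same route as the paper: use (G2) to get $g(t)\ge \bigl(g(s)/s^q\bigr)\,t^q$ for $0<t<s$, integrate to obtain $G(s)\ge s g(s)/p^\ast$, and multiply by $n$ using $n/p^\ast=a$. Your extra remark that (G1) makes $G$ finite is a small detail the paper leaves implicit.
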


\begin{proof}
Fix $s>0$.  Since $t\mapsto t^{-q}g(t)$ is nonincreasing, for every $t\in(0,s)$ we have
$$
   \frac{g(t)}{t^q}\ge \frac{g(s)}{s^q}.
$$
Therefore
$$
   G(s)=\int_0^s g(t)\,dt
   \ge \frac{g(s)}{s^q}\int_0^s t^q\,dt
   =\frac{s g(s)}{q+1}.
$$
Since $q+1=p^*$, this gives
$$
   G(s)\ge \frac{s g(s)}{p^*}.
$$
Multiplying by $n$ and using
$$
   \frac{n}{p^*}=\frac{n-p}{p}=a,
$$
we obtain \eqref{eq:pohozaev-sign}.
\end{proof}

\begin{prop}[Pohozaev identity]\label{prop:pohozaev}
Let $f:(0,\infty)\to[0,\infty)$ be continuous, and define
$$
   F_f(s):=\int_0^s f(t)\,dt.
$$
Let $u>0$ be a  regular solution of
\begin{equation}\label{eq:pohozaev-eqn}
   -\Delta_p u=f(u)\qquad\hbox{in }B_r.
\end{equation}
Then
\begin{equation}\label{eq:pohozaev-identity}
   P_u(r)=\int_{B_r}\left[nF_f(u)-a\,u f(u)\right]dx,
\end{equation}
where
\begin{equation*}\label{eq:P-u-r}
   P_u(r):=\int_{\partial B_r}
   \left[
       a\,u|\nabla u|^{p-2}u_\nu
       -\frac r p |\nabla u|^p
       +r|\nabla u|^{p-2}u_\nu^2
       +rF_f(u)
   \right]dS.
\end{equation*}
The same identity holds for weak solutions by the standard approximation argument.
\end{prop}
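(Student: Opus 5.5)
The plan is to multiply the equation by the Pohozaev vector field $x\cdot\nabla u$ and by $a\,u$, and integrate over $B_r$. We first treat a regular solution, meaning $u\in C^1(\overline{B_r})$ with $|\nabla u|^{p-2}\nabla u\in W^{1,2}_{\loc}$ or, more conveniently, obtained as a limit of smooth solutions of regularized problems. Then we pass to weak solutions by approximation.

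For the first multiplier, we use the pointwise divergence identity, valid where $u$ is $C^2$ and $\nabla u\neq 0$:
$$
\divg\Bigl((x\cdot\nabla u)|\nabla u|^{p-2}\nabla u-\tfrac1p x|\nabla u|^p\Bigr)
=(x\cdot\nabla u)\Delta_p u+\Bigl(1-\tfrac np\Bigr)|\nabla u|^p .
$$
This comes from $\nabla(x\cdot\nabla u)\cdot|\nabla u|^{p-2}\nabla u=|\nabla u|^p+\tfrac1p x\cdot\nabla(|\nabla u|^p)$. Since $u$ solves the equation, $(x\cdot\nabla u)f(u)=x\cdot\nabla F_f(u)=\divg(xF_f(u))-nF_f(u)$. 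Integrating over $B_r$, where $x=r\nu$ on $\partial B_r$, gives
$$
\int_{\partial B_r}\Bigl[r|\nabla u|^{p-2}u_\nu^2-\tfrac rp|\nabla u|^p+rF_f(u)\Bigr]dS
=\int_{B_r}\bigl[nF_f(u)-a|\nabla u|^p\bigr]dx,
$$
using $1-\frac np=-a$.

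For the second multiplier, we test the equation with $u$ itself:
$$
\int_{B_r}|\nabla u|^p\,dx=\int_{B_r}uf(u)\,dx+\int_{\partial B_r}u|\nabla u|^{p-2}u_\nu\,dS .
$$
Multiplying this by $a$ and adding it to the previous identity eliminates the gradient term. The result is exactly $P_u(r)=\int_{B_r}[nF_f(u)-a\,uf(u)]\,dx$. Note that $F_f(u)$ is finite because $f\ge0$ is continuous and $u$ is locally bounded and positive.

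The main obstacle is justifying the divergence identity, since $p$-harmonic-type solutions are only $C^{1,\theta}$ and degenerate or singular where $\nabla u=0$. The approach is:
\begin{enumerate}[(i)]
\item Local boundedness of $u$ and $C^{1,\theta}_{\loc}$ regularity (DiBenedetto, Tolksdorf) hold because $f(u)\in L^\infty_{\loc}$.
\item Approximate by solutions $u_\varepsilon$ of $-\divg((\varepsilon+|\nabla u_\varepsilon|^2)^{(p-2)/2}\nabla u_\varepsilon)=f(u)$ on a slightly larger ball with boundary data $u$. Here the right-hand side is frozen, so the $u_\varepsilon$ are smooth, and they converge to $u$ in $C^{1,\theta'}$ on compacts by uniform $C^{1,\theta}$ estimates and uniqueness for the monotone operator.
\item Derive the regularized Pohozaev identity for $u_\varepsilon$. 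The frozen term $(x\cdot\nabla u_\varepsilon)f(u)$ is kept as it is rather than written as a divergence.
\item Let $\varepsilon\to0$ using uniform $C^1$ convergence on $\overline{B_r}$. In the limit that term becomes $(x\cdot\nabla u)f(u)=x\cdot\nabla F_f(u)$, and since $F_f(u)\in C^1$ we can integrate by parts.
\end{enumerate}
The boundary integrals converge because $\nabla u_\varepsilon\to\nabla u$ uniformly on $\partial B_r$. This yields the identity for every $r$ with $\overline{B_r}$ inside the domain, which is the claimed statement for weak solutions.
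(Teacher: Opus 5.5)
Your proposal is correct and follows essentially the same route as the paper: multiply by $x\cdot\nabla u$, use $|\nabla u|^{p-2}\nabla u\cdot\nabla(x\cdot\nabla u)=|\nabla u|^p+\frac1p x\cdot\nabla|\nabla u|^p$ and $f(u)\,x\cdot\nabla u=x\cdot\nabla F_f(u)$, then add $a$ times the identity obtained by testing with $u$ to eliminate $\int_{B_r}|\nabla u|^p$. Your regularization sketch (freezing the right-hand side $f(u)$, so that uniqueness for the monotone Dirichlet problem identifies the limit as $u$) gives more detail than the paper, which only cites a ``standard approximation argument''; one small correction is that with $f$ merely continuous the $u_\varepsilon$ are $W^{2,q}_{\loc}\cap C^{1,\theta}$ rather than smooth, which still suffices (or you can mollify $f(u)$).
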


\begin{proof}
Set
$$
   A:=|\nabla u|^{p-2}\nabla u.
$$
The equation \eqref{eq:pohozaev-eqn} is
$$
   -\divg A=f(u).
$$
Multiplying by $x\cdot\nabla u$ and integrating over $B_r$ gives
\begin{equation}\label{eq:pohozaev-step1}
   \int_{B_r} f(u)x\cdot\nabla u\,dx
   =-\int_{B_r}(\divg A)(x\cdot\nabla u)\,dx.
\end{equation}
The right-hand side is
\begin{align}\label{eq:pohozaev-step2}
   -\int_{B_r}(\divg A)(x\cdot\nabla u)
   &=-\int_{\partial B_r}(A\cdot\nu)(x\cdot\nabla u)\,dS
     +\int_{B_r}A\cdot\nabla(x\cdot\nabla u)\,dx.
\end{align}
Since
$$
   A\cdot\nabla(x\cdot\nabla u)
   =|\nabla u|^p+x\cdot\nabla\left(\frac{|\nabla u|^p}{p}\right),
$$
we get
\begin{equation}\label{eq:pohozaev-step3}
   \int_{B_r}A\cdot\nabla(x\cdot\nabla u)
   =\frac{p-n}{p}\int_{B_r}|\nabla u|^p\,dx
    +\frac r p\int_{\partial B_r}|\nabla u|^p\,dS.
\end{equation}
On the other hand,
\begin{equation}\label{eq:pohozaev-step4}
   \int_{B_r} f(u)x\cdot\nabla u\,dx
   =\int_{B_r}x\cdot\nabla F_f(u)\,dx
   =r\int_{\partial B_r}F_f(u)\,dS-n\int_{B_r}F_f(u)\,dx.
\end{equation}
Finally, testing the equation with $u$ gives
\begin{equation}\label{eq:pohozaev-step5}
   \int_{B_r}|\nabla u|^p\,dx
   =\int_{B_r}u f(u)\,dx+
     \int_{\partial B_r}u|\nabla u|^{p-2}u_\nu\,dS.
\end{equation}
Combining \eqref{eq:pohozaev-step1}--\eqref{eq:pohozaev-step5}, and using $x\cdot\nu=r$ and
$x\cdot\nabla u=ru_\nu$ on $\partial B_r$, gives \eqref{eq:pohozaev-identity}.
\end{proof}

Now we use the following consequence of the pole analysis, which gives a key observation in the current manuscript. The essential part is that a positive regular part of the solution forces the pole Pohozaev value to be negative.

\begin{prop}[Pohozaev value at a pole]\label{prop:pole-pohozaev}
Let $w$ be as in Theorem \ref{thm:KV}.  Suppose that
\begin{equation}\label{eq:pole-expansion-for-P}
     w(x)=\kappa |x|^{-\alpha}+\beta+o(1)
        \qquad\text{as }x\to0 , 
\end{equation}
with $\kappa>0$ and $\beta>0$, and suppose that the derivative asymptotics
from Theorem~\ref{thm:KV} hold.  Define, for $0<r<1$,
\begin{equation}\label{eq:p-harmonic-pohozaev-functional}
   \mathcal P_w(r)
   :=\int_{\partial B_r}
       \left[
          a\,w |\nabla w|^{p-2}w_\nu
          -\frac r p |\nabla w|^p
          +r|\nabla w|^{p-2}w_\nu^2
       \right]dS.
\end{equation}
Then $\mathcal P_w(r)$ is independent of $r$, and
\begin{equation*}\label{eq:pole-pohozaev-negative}
 \mathcal P_w(r) =  -a\,\omega_{n-1}(\alpha\kappa)^{p-1}\beta <0  \qquad\text{for every }0<r<1.
\end{equation*}
\end{prop}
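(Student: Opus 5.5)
The plan has two parts. First, show that $\mathcal P_w(r)$ does not depend on $r$. Then evaluate it in the limit $r\to0$ using the expansion \eqref{eq:pole-expansion-for-P} together with the derivative asymptotics.

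\emph{Independence of $r$.} Fix $0<r_1<r_2<1$ and work on the annulus $A=B_{r_2}\setminus\overline B_{r_1}$, where $w$ is $p$-harmonic. When $\kappa>0$, Theorem~\ref{thm:KV} gives $\nabla w\ne0$ near the origin, so after shrinking we may assume $w$ is smooth there. On a general annulus I would instead use the standard regularization of the Pohozaev identity. Concretely, I repeat the computation of Proposition~\ref{prop:pohozaev} with $f\equiv0$ (so $F_f\equiv0$) on $A$ rather than $B_r$. Multiplying $\divg(|\nabla w|^{p-2}\nabla w)=0$ by $x\cdot\nabla w$ and by $w$, and integrating by parts on $A$, produces the boundary terms of \eqref{eq:p-harmonic-pohozaev-functional} on both spheres. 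The bulk terms $\frac{p-n}{p}\int_A|\nabla w|^p$ and $a\int_A |\nabla w|^p$ cancel, since $a=\frac{n-p}{p}$. This gives $\mathcal P_w(r_2)-\mathcal P_w(r_1)=0$.

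\emph{Limit as $r\to0$.} Write $w=\kappa\Phi+\beta+z$. By Theorem~\ref{thm:KV} and Corollary~\ref{cor:regular-part-gradient-decay},
\[
\nabla w=-\alpha\kappa|x|^{-\alpha-2}x+\nabla z,\qquad |x|^{\alpha+1}|\nabla z|\to0 .
\]
Hence, on $\partial B_r$,
\[
w_\nu=-\alpha\kappa r^{-\alpha-1}(1+o(1)),\qquad |\nabla w|=\alpha\kappa r^{-\alpha-1}(1+o(1)).
\]
I then treat the pure-gradient terms and the term containing $w$ separately.

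The pure-gradient terms $-\frac rp|\nabla w|^p+r|\nabla w|^{p-2}w_\nu^2$ are of size $r\cdot r^{-(\alpha+1)p}\cdot r^{n-1}$ after integration over $\partial B_r$. Since $(\alpha+1)(p-1)=n-1$, this is $r^{-\alpha}$, so these terms diverge individually. They must therefore be combined with the leading part of the term $a\,w|\nabla w|^{p-2}w_\nu$. The leading-order contributions are:
\begin{itemize}
\item from $a\kappa r^{-\alpha}\cdot(-(\alpha\kappa)^{p-1}r^{-(\alpha+1)(p-1)})$: the quantity $-a\kappa(\alpha\kappa)^{p-1}r^{-\alpha}\,\omega_{n-1}$;
\item from the two gradient terms: $(\alpha\kappa)^p(1-\frac1p)r^{-\alpha}\,\omega_{n-1}$.
\end{itemize}
Using \eqref{eq:a-alpha-identity}, $a=\alpha(p-1)/p$, we have $a\kappa(\alpha\kappa)^{p-1}=(\alpha\kappa)^p\frac{p-1}{p}$, so these two contributions cancel exactly. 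What remains at order $r^0$ is $a\beta|\nabla w|^{p-2}w_\nu$ integrated over $\partial B_r$. Its value is $-a\beta(\alpha\kappa)^{p-1}\omega_{n-1}(1+o(1))$, which is consistent with the flux in \eqref{eq:KV-delta-mass}.

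\emph{Main obstacle: the error terms.} The relative $o(1)$ errors in $\nabla w$, multiplied against the divergent $r^{-\alpha}$ terms, are a priori only $o(r^{-\alpha})$, which is not enough. The fix is to expand to first order in $\nabla z$. The $r^{-\alpha}$-order pieces linear in $\nabla z$ are:
\begin{itemize}
\item from the gradient terms: $r\,(\alpha\kappa)^{p-1}r^{-(\alpha+1)(p-1)}\cdot O(|\nabla z|)\cdot r^{n-1}$, after expanding $|\nabla w|^p$ and $|\nabla w|^{p-2}w_\nu^2$;
\item from the $w$-term: $a\kappa r^{-\alpha}\cdot(\text{linearization of }|\nabla w|^{p-2}w_\nu)$.
\end{itemize}
Both involve $\nabla z$ weighted by $r\cdot r^{-\alpha}\cdot r^{\alpha}=r$ relative to the natural scale. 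So they are bounded by $C\,r\sup_{\partial B_r}|\nabla z|$, which tends to $0$ by Corollary~\ref{cor:regular-part-gradient-decay}; this is exactly why that corollary was proved. Quadratic terms in $\nabla z$ are even smaller. The term $z\,|\nabla w|^{p-2}w_\nu$ is $o(1)$ since $z\to0$ and the flux is bounded.

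Alternatively, I can avoid expanding the $w$-term altogether. The flux $\int_{\partial B_r}|\nabla w|^{p-2}w_\nu\,dS$ is exactly $-\omega_{n-1}(\alpha\kappa)^{p-1}$ for every $r$, by \eqref{eq:KV-delta-mass}. So the $(\kappa\Phi+\beta)$ part of the $w$-term contributes exactly $-(a\kappa r^{-\alpha}+a\beta)\,\omega_{n-1}(\alpha\kappa)^{p-1}$, and only the gradient terms need the linearization above.

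Letting $r\to0$ and using $r$-independence gives $\mathcal P_w\equiv -a\,\omega_{n-1}(\alpha\kappa)^{p-1}\beta<0$.
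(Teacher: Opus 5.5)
Your proposal is correct and follows essentially the paper's route. The steps match one for one: $r$-independence from the Pohozaev identity on annuli; exact cancellation of the divergent $r^{-\alpha}$ terms via $a/\alpha=(p-1)/p$ (the paper phrases this as $\mathcal P_h(r)=0$ for $h=\kappa|x|^{-\alpha}$); the constant $-a\,\omega_{n-1}(\alpha\kappa)^{p-1}\beta$ from the $\beta$-term; and error terms controlled by $\sup_{\partial B_r}|z|+\sup_{\partial B_r}r|\nabla z|\to0$ through Corollary~\ref{cor:regular-part-gradient-decay}. The only differences are cosmetic: the paper bounds the difference of boundary integrands for $w$ and $h+\beta$ directly by Lipschitz estimates rather than by a first-order expansion, and your use of the exact flux from \eqref{eq:KV-delta-mass} for the $w$-term is a harmless variant of the same argument.
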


\begin{proof}
First we prove that $\mathcal P_w(r)$ is independent of $r$. Let
$0<r_1<r_2<1$. Applying the Pohozaev identity Proposition~\ref{prop:pohozaev} in the annulus
$B_{r_2}\setminus \overline{B_{r_1}}$, where $w$ is $p$-harmonic, gives
$$
 P_w(r_2)-  P_w(r_1)=0.
$$
Hence $\mathcal P_w(r)=P_w(r)$ is constant.

It remains to compute this constant. Put $h(x):=\kappa |x|^{-\alpha}.$
Then on $\partial B_r$,
$$
  h_\nu=-\alpha\kappa r^{-\alpha-1},  \qquad |\nabla h|=\alpha\kappa r^{-\alpha-1}.
$$
The pure pole $h$ has zero Pohozaev value. Indeed,
$$
  h=\frac r\alpha |\nabla h|  \qquad\text{on }\partial B_r,
$$
and therefore
\begin{align*}
\mathcal P_h(r)
&= \int_{\partial B_r} \left[ a h|\nabla h|^{p-2}h_\nu
-\frac r p |\nabla h|^p +r|\nabla h|^{p-2}h_\nu^2 \right]\,dS           \\
&= \int_{\partial B_r} \left[ -a h|\nabla h|^{p-1}
+\frac{p-1}{p}r|\nabla h|^p \right]\,dS          \\
&= \int_{\partial B_r} \left[ -\frac a\alpha r|\nabla h|^p
+\frac{p-1}{p}r|\nabla h|^p \right]\,dS =0,
\end{align*}
since
$\frac a\alpha=\frac{p-1}{p}.$

Now we consider the model function $h+\beta$. Since $\nabla(h+\beta)=\nabla h$,
only the first term in the Pohozaev functional changes. Thus
$$
\begin{aligned}
\mathcal P_{h+\beta}(r)
&= \mathcal P_h(r) + a\beta \int_{\partial B_r} |\nabla h|^{p-2}h_\nu\,dS                         \\
&= a\beta \int_{\partial B_r} -(\alpha\kappa r^{-\alpha-1})^{p-1}\,dS            \\
&= -a\beta(\alpha\kappa)^{p-1} r^{n-1-(\alpha+1)(p-1)} \omega_{n-1}.
\end{aligned}
$$
Since $ (\alpha+1)(p-1)=n-1,$
we obtain
$$
\mathcal  P_{h+\beta}(r) = -a\,\omega_{n-1}(\alpha\kappa)^{p-1}\beta .
$$

We now justify the passage from the model function $h+\beta$ to the actual function $w$.  Write
$$
\eta(x):=w(x)-h(x)-\beta .
$$
By the pole expansion in Theorem~\ref{thm:KV} together with Corollary~\ref{cor:regular-part-gradient-decay}, 
$$   
\omega(r):=  \sup_{\partial B_r}|\eta| +  \sup_{\partial B_r} r|\nabla \eta|   \to  0  \qquad\text{as }r\to 0 .
$$
Set ${\mathcal C}_r:=\alpha\kappa r^{-\alpha-1}.$
Then on $\partial B_r$,
$$ 
\nabla h=-{\mathcal C}_r\nu,\qquad  h=\kappa r^{-\alpha}=\frac r\alpha {\mathcal C}_r .
$$
In particular, for $r$ small enough, $|\nabla w|$ is comparable to ${\mathcal C}_r$.

Let
$$ 
\mathcal I_r(u) := a u|\nabla u|^{p-2}u_\nu -\frac r p |\nabla u|^p +r|\nabla u|^{p-2}u_\nu^2
$$
denote the boundary integrand in the $p$-harmonic Pohozaev functional.
We claim that
$$
\int_{\partial B_r} \left| \mathcal I_r(w)-\mathcal I_r(h+\beta) \right|\,dS   \longrightarrow 0 .
$$

Indeed, since the vectors $\nabla w$ and $\nabla h$ both have size comparable to ${\mathcal C}_r$, the $C^1$-bounds for the maps
$$ 
\xi\mapsto |\xi|^{p-2}\xi\cdot\nu,\qquad  \xi\mapsto |\xi|^p,\qquad  \xi\mapsto |\xi|^{p-2}(\xi\cdot\nu)^2
$$
give, uniformly on $\partial B_r$,
\begin{align*}
\bigl| |\nabla w|^{p-2}w_\nu - |\nabla h|^{p-2}h_\nu
\bigr|  &\le C {\mathcal C}_r^{p-2}|\nabla\eta|,\\
\bigl| |\nabla w|^p-|\nabla h|^p \bigr| &\le C {\mathcal C}_r^{p-1}|\nabla\eta|,\\
\bigl| |\nabla w|^{p-2}w_\nu^2 - |\nabla h|^{p-2}h_\nu^2
\bigr| &\le C {\mathcal C}_r^{p-1}|\nabla\eta|.
\end{align*}
Therefore, 
$$
 \left| \mathcal I_r(w)-\mathcal I_r(h+\beta) \right| 
\le  C|\eta|{\mathcal C}_r^{p-1} + C(h+|\beta|){\mathcal C}_r^{p-2}|\nabla\eta|
+ Cr{\mathcal C}_r^{p-1}|\nabla\eta| .
$$
Using $h=(r/\alpha){\mathcal C}_r$, this becomes
$$ 
\left| \mathcal I_r(w)-\mathcal I_r(h+\beta) \right|
\le C|\eta|{\mathcal C}_r^{p-1} + Cr{\mathcal C}_r^{p-1}|\nabla\eta|
+ C|\beta|{\mathcal C}_r^{p-2}|\nabla\eta|.
$$
After integration over $\partial B_r$, and using
$$
 (\alpha+1)(p-1)=n-1,
$$
we obtain
\begin{align*}
\int_{\partial B_r} \left| \mathcal I_r(w)-\mathcal I_r(h+\beta) \right|\,dS
&\le C\sup_{\partial B_r}|\eta| + C\sup_{\partial B_r} r|\nabla\eta| + C r^\alpha \sup_{\partial B_r} r|\nabla\eta|  \\
&\le C\omega(r)\to0 .
\end{align*}
Hence
$$
 \mathcal  P_w(r)-\mathcal P_{h+\beta}(r)\to 0 \qquad\text{as }r\downarrow0 .
$$
Since we have already computed
$$
 \mathcal  P_{h+\beta}(r)  = -a\omega_{n-1}(\alpha\kappa)^{p-1}\beta ,
$$
it follows that
$$
 \lim_{r\downarrow0}\mathcal P_w(r)  =  -a\omega_{n-1}(\alpha\kappa)^{p-1}\beta .
$$
Because $\mathcal P_w(r)$ is independent of $r$, the same identity holds for every
$0<r<1$.
The right-hand side is strictly negative since $a>0$, $\kappa>0$, and
$\beta>0$.

\end{proof}

\subsection{The normalized Pohozaev neck lemma}

We now prove the key local compactness result.

\begin{rem}
Lemma~\ref{lem:neck} below is a conditional neck-improvement lemma. It assumes a preliminary singular-rate upper control
$$
   v_j(y)\lesssim |y|^{-a},\qquad a=\frac{n-p}{p},
$$
and upgrades it to the sharp $p$-harmonic fundamental rate
$$
   \inf_{\partial B_{\Gamma_j}}v_j\lesssim \Gamma_j^{-\alpha},
   \qquad \alpha=\frac{n-p}{p-1}.
$$
The proof does not use Kelvin inversion. The preliminary singular control gives a uniform annular Harnack inequality at the first-crossing scale; the Pohozaev sign argument then rules out slow decay at the fundamental rate.
\end{rem}








\begin{lem}[Pohozaev neck improvement under preliminary singular control]\label{lem:neck}
Let $\Gamma_j\to\infty$. For each $j$, let $v_j$ be a positive weak solution of
$$
   -\Delta_p v_j=f_j(v_j)\qquad\text{in }B_{4\Gamma_j}.
$$
Assume the following conditions.
\begin{enumerate}[(N1)]

\item Normalization and boundedness:
$$
   v_j(0)=1,
   \qquad
   0<v_j\le C_0\qquad\text{in }B_{4\Gamma_j}.
$$

\item Critical upper growth:
\begin{equation}\label{eq:fj-upper}
   0\le f_j(s)\le \Lambda_0 s^q\qquad\text{for all }s>0.
\end{equation}

\item Critical convergence:
\begin{equation}\label{eq:fj-convergence}
   f_j\to f_\infty
   \qquad\text{locally uniformly on }(0,\infty).
\end{equation}

\item Pohozaev sign: if
$$
   F_j(s):=\int_0^s f_j(t)\,dt,
$$
then
\begin{equation}\label{eq:fj-pohozaev-sign}
   nF_j(s)-a\,s f_j(s)\ge0
   \qquad\text{for every }s>0.
\end{equation}

\item Bubble limit:
$$
   v_j\to U\qquad\text{in }C^1_{\loc}(\mathbb R^n),
$$
where $U$ is an Aubin--Talenti $p$-bubble solving
$$
   -\Delta_p U=f_\infty(U)\qquad\text{in }\mathbb R^n.
$$

\item Preliminary singular upper control: There exist constants $C_s>0$ and
$R_s>1$, independent of $j$, such that
\begin{equation}\label{eq:prelim-singular-control}
   v_j(y)\le C_s |y|^{-a}
   \qquad\text{for }R_s\le |y|\le 4\Gamma_j.
\end{equation}

\end{enumerate}
Then there exists a constant
$$
   C=C(n,p,C_0,\Lambda_0,U,C_s,R_s)
$$
such that
\begin{equation}\label{eq:neck-conclusion}
   \inf_{\partial B_{\Gamma_j}}v_j\le C\Gamma_j^{-\alpha}
\end{equation}
for all sufficiently large $j$.
\end{lem}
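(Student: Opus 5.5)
We argue by contradiction. Suppose that, along a subsequence,
$$
 \Gamma_j^{\alpha}\inf_{\partial B_{\Gamma_j}}v_j\to\infty .
$$
The plan is to produce, after rescaling at a suitable intermediate scale, a positive $p$-harmonic limit in a punctured ball with a pole and a strictly positive regular part. Proposition~\ref{prop:pole-pohozaev} would assign such a limit a negative Pohozaev value. Hypothesis (N4), through Proposition~\ref{prop:pohozaev}, forces this value to be nonnegative.

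\emph{Step 1: first-crossing scale.} For $R_s\le r\le\Gamma_j$ set $m_j(r):=r^\alpha\inf_{\partial B_r}v_j$. By (N5) and \eqref{eq:bubble-tail}, $m_j(r)\to r^\alpha\inf_{\partial B_r}U$, which tends to $\kappa$ as $r\to\infty$. Hence $m_j(\bar R)\le 2\kappa$ for a fixed large $\bar R$ and all large $j$. Fix a large $K$ and let $r_j:=\sup\{r\in[\bar R,\Gamma_j]: m_j(r)\le K\}$. Since $m_j(\Gamma_j)\to\infty$, we have $r_j<\Gamma_j$ and $m_j(r_j)=K$. Moreover $r_j\to\infty$, because $m_j\to m_U$ locally uniformly and $m_U\le 2\kappa<K$ on compact ranges. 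Next rescale by $w_j(x):=r_j^{\alpha}v_j(r_jx)$ on $B_{\Gamma_j/r_j}$. This satisfies $-\Delta_p w_j=\tilde f_j(w_j)$ with
$$
\tilde f_j(s)=r_j^{\alpha(p-1)+p}f_j(r_j^{-\alpha}s),
$$
and by (N2)
$$
\tilde f_j(s)\le\Lambda_0 r_j^{\alpha(p-1)+p-\alpha q}s^q .
$$
The key point is that (N6) gives $v_j(r_jx)\le C_sr_j^{-a}|x|^{-a}$. Therefore
$$
r_j^{-p}\,r_j^{\alpha(p-1)+p}\,\tilde f_j(w_j)\big/r_j^{\alpha(p-1)+p}
=r_j^{p}f_j(v_j)\,r_j^{\alpha(p-1)}\lesssim r_j^{\alpha(p-1)}r_j^{p}r_j^{-aq}|x|^{-aq}.
$$
Because $aq=a+p$ and $\alpha(p-1)=n-p=ap$, the exponent is $ap+p-a-p$. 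Rather than chase this bookkeeping, I would argue directly: the right-hand side of the equation for $w_j$ is $O(r_j^{-\epsilon}|x|^{-aq})$ in $L^\infty_{\loc}(\R^n\setminus\{0\})$ for some $\epsilon>0$, as long as the crossing scale has $\sup_{\partial B_{r_j}}v_j\ll r_j^{-a}$. Verifying this genuinely uses (N6). It is the main technical obstacle: I must check that the potential $\tilde f_j(w_j)/w_j^{p-1}\lesssim r_j^{p}v_j^{q-p+1}$ tends to zero on annuli, which follows from $v_j\le C_sr_j^{-a}$ only in the borderline sense. So I would first try to use $v_j\lesssim r_j^{-a}$ to obtain a uniform annular Harnack inequality (the potential is bounded), and then improve it by iteration: Harnack on annuli turns $m_j\le K$ at $r_j$ into $\sup_{\partial B_{r_j}}v_j\le CKr_j^{-\alpha}$, which is much smaller than $r_j^{-a}$ and kills the potential.

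\emph{Step 2: the limit profile.} With the annular Harnack inequality at hand, $w_j$ is locally bounded above on $B_{L}\setminus\{0\}$ by $C|x|^{-\alpha}$ on the range $[\bar R/r_j,1]$. Here the upper bound comes from the definition of $r_j$ via $m_j(r)\ge K$ for $r>r_j$, and from a comparison below with the pole using bubble-core mass. The lower bound at large $|x|\le\Gamma_j/r_j$ is $w_j\ge K|x|^{-\alpha}$. $C^{1,\theta}$ estimates give $w_j\to w$ in $C^1_{\loc}(B_1\setminus\{0\})$, with $w$ positive and $p$-harmonic, $w\le C|x|^{-\alpha}$, and $\inf_{\partial B_1}w=K$. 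By Theorem~\ref{thm:KV}, $w=\gamma\Phi+\beta+o(1)$. The flux is fixed by the bubble core: the mass of $\tilde f_j(w_j)$ concentrating at $0$ equals $\int_{\R^n}f_\infty(U)=c_{n,p}\kappa^{p-1}$, so $\gamma=\kappa$. Then $\beta=\lim(w-\kappa\Phi)$ is positive once $K$ is large compared with the oscillation. To see this, compare $w$ on $\partial B_1$, where $w\ge K$, with $\kappa\Phi+\beta$; the maximum-principle comparison of $p$-harmonic functions with the same pole gives $\beta\ge K-\kappa-o(1)>0$.

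\emph{Step 3: sign contradiction.} Apply Proposition~\ref{prop:pohozaev} to $w_j$ on $B_\rho$ for fixed small $\rho$. By (N4), rescaled, the interior integral is nonnegative, and the term $\rho\int_{\partial B_\rho}\tilde F_j(w_j)$ tends to $0$ because $\tilde f_j\to0$ on compact sets of values. By $C^1$ convergence, the boundary terms converge to $\mathcal P_w(\rho)$, so $\mathcal P_w(\rho)\ge0$. But Proposition~\ref{prop:pole-pohozaev} gives
$$
\mathcal P_w(\rho)=-a\omega_{n-1}(\alpha\kappa)^{p-1}\beta<0,
$$
a contradiction. The steps requiring the most care are the Harnack inequality and the vanishing of the potential in Step 1, and the identification $\gamma=\kappa>0$ together with $\beta>0$ in Step 2.
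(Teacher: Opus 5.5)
Your architecture matches the paper's: a crossing scale, the rescaling $w_j=r_j^\alpha v_j(r_jx)$, and (N6) giving a bounded potential. That potential is $\Lambda_0 r_j^p v_j^{q-p+1}$, bounded because $a(q-p+1)=p$; your identity $aq=a+p$ is false for $p\neq 2$, since in fact $aq=n-a$. From there you get a uniform annular Harnack inequality, a $p$-harmonic limit with a pole, $\gamma=\kappa$ from the flux, $\beta\ge K-\kappa>0$ by comparison, and the Pohozaev sign contradiction.

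\textbf{The gap: the sharp neck upper bound.} The missing step is $v_j(y)\le CK|y|^{-\alpha}$ on the whole neck $\bar R\le|y|\le r_j$. You define $r_j$ as the \emph{last} radius with $m_j(r)\le K$, which only tells you $m_j(r)>K$ for $r>r_j$. That is a lower bound outside $r_j$. Your Step 2 says the upper bound on $[\bar R/r_j,1]$ comes from $m_j(r)\ge K$ for $r>r_j$ and from a comparison from below with the pole. Neither can bound $v_j$ from above at radii $r<r_j$. Your Step 1 iteration does not fill this either: Harnack at the scale $r_j$, chained downward, loses a factor $C_H$ per dyadic annulus. It therefore cannot give the rate $|y|^{-\alpha}$ down to $|y|\sim\bar R$.

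\textbf{Why the bound is essential.} It is exactly what shows no mass of $f_j(v_j)$ is lost in $R\le|y|\le\rho r_j$. By itself, (N6) gives only $f_j(v_j)\lesssim|y|^{-aq}=|y|^{-(n-a)}$, which is not integrable at infinity. So you cannot conclude $\int_{B_{\rho r_j}}f_j(v_j)\to\int_{\R^n}f_\infty(U)$, i.e.\ $\gamma=\kappa$. If $\gamma$ were larger, say comparable to $K$, your comparison on $\partial B_1$ gives only $\beta\ge K-\gamma$, with no sign. The example $w=\gamma|x|^{-\alpha}+K-\gamma$ with $\gamma>K$ has $w\ge K$ on $\partial B_1$ and a positive Pohozaev value, so the contradiction disappears. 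The same bound also supplies the Kichenassamy--V\'eron hypothesis $w\le C\Phi$.

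\textbf{How to repair it.} Any of the following works.
\begin{enumerate}
\item Take the \emph{first} crossing, $r_j:=\inf\{r\ge\bar R:\ m_j(r)\ge K\}$. Then $\inf_{\partial B_r}v_j\le Kr^{-\alpha}$ on $[\bar R,r_j]$. The (N6)-uniform Harnack inequality on every annulus $A_r$ with $3R_s\le r\le r_j$ upgrades this to $\sup_{A_r}v_j\le C_HKr^{-\alpha}$. The remaining compact range of scales is handled by $v_j\to U$.
\item Keep your last crossing, but prove that $r\mapsto\inf_{\partial B_r}v_j$ is concave in $r^{-\alpha}$. This follows by comparing the $p$-superharmonic $v_j$ with radial functions $A|x|^{-\alpha}+B$ on annuli. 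It forces $m_j\le K$ on $[\bar R,r_j]$.
\item Use the paper's route: the sup-based height $H_j(r)=r^\alpha\sup_{A_r}v_j$. There the first crossing gives the neck upper bound directly, and Harnack is needed only at the crossing scale to get $\inf_{A_{\rm in}}w_j\ge c_HK$.
\end{enumerate}
Your inf-based height trades this: you get $w\ge K$ on $\partial B_1$ for free, but you must then apply Harnack throughout the neck.
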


\begin{proof}
Let $\kappa>0$ be defined by
\begin{equation*}\label{eq:kappa-def}
   \kappa:=\lim_{|x|\to\infty}|x|^\alpha U(x).
\end{equation*} 
Since $U$ is an Aubin--Talenti bubble, its explicit profile satisfies
$$
-\Delta_p U=L_U U^q
\qquad\text{in }\R^n
$$
for some constant $L_U>0$. Since also
$$
-\Delta_p U=f_\infty(U),
$$
it follows that
$$
f_\infty(U)=L_U U^q
\qquad\text{in }\R^n .
$$

We argue by contradiction.  Suppose that \eqref{eq:neck-conclusion} fails.  Then, after passing to a
subsequence,
\begin{equation}\label{eq:slow-decay}
   \Gamma_j^\alpha\inf_{\partial B_{\Gamma_j}}v_j\to\infty.
\end{equation}

For $r>0$, define the closed annulus
$$
A_r:=\{x\in\mathbb R^n:\ r/2\le |x|\le 2r\}
$$
and the annular height
$$
H_j(r):=r^\alpha\sup_{A_r}v_j .
$$
We also write
$$
H_U(r):=r^\alpha\sup_{A_r}U .
$$
Since $U$ is an Aubin--Talenti bubble and
$$
\lim_{|x|\to\infty}|x|^\alpha U(x)=\kappa ,
$$
we may choose $R_0>2$ so large that
$$
S_U:=\sup_{r\ge R_0}H_U(r)<\infty .
$$
By the local uniform convergence $v_j\to U$, we have
$H_j(R_0)\to H_U(R_0).$
Hence, after increasing $j$ if necessary,
$$
H_j(R_0)\le S_U+1 .
$$

Set
$$
   \mathcal D:=\left\{x\in\mathbb R^n:\frac13<|x|<3\right\},
   \qquad
   A_{\rm in}:=\left\{x\in\mathbb R^n:\frac34\le |x|\le \frac32\right\}.
$$
The preliminary singular control will imply below that, on $\mathcal D$, the rescaled
functions $w_j$ solve an equation of the form
$$
   -\Delta_p w_j=b_j(x)w_j^{p-1},
   \qquad
   0\le b_j\le B_s,
$$
with $B_s$ independent of $j$. Let $C_{\rm Har}$ be the Harnack-chain constant in
$\mathcal D$ for such equations, chosen so that
$$
   \sup_{A_1}h\le C_{\rm Har}\inf_{A_{\rm in}}h,
   \qquad A_1:=B_2\setminus B_{1/2},
$$
for every positive weak solution of
$$
   -\Delta_p h=b(x)h^{p-1},\qquad 0\le b\le B_s,
$$
in $\mathcal D$. Set $c_H:=C_{\rm Har}^{-1}$. Choose $K$ so large that
$$
K>\max\left\{S_U+1,\ 2S_U,\ \frac{\kappa+2}{c_H}\right\}.
$$

Then
\begin{equation*}\label{eq:Hj-R0-bound}
H_j(R_0)<K
\end{equation*}
for all sufficiently large $j$.

By the contradiction assumption \eqref{eq:slow-decay},
\begin{equation*}\label{eq:Hj-large-outer}
H_j(\Gamma_j/2)
\ge (\Gamma_j/2)^\alpha \inf_{\partial B_{\Gamma_j}}v_j \to\infty .
\end{equation*}
Therefore, by the continuity of $r\mapsto H_j(r)$, there exists a first radius
$$
r_j\in [R_0,\Gamma_j/2]
$$
such that
\begin{equation}\label{eq:first-crossing}
H_j(r_j)=K,\qquad H_j(r)\le K\quad\text{for }R_0\le r\le r_j .
\end{equation}

We claim that
$$
r_j\to\infty .
$$
Indeed, if not, then after passing to a subsequence we would have
$$
r_j\to r_\ast\in [R_0,\infty).
$$
Since the annuli $A_{r_j}$ remain in a fixed compact subset of
$\mathbb R^n\setminus\{0\}$, the local uniform convergence $v_j\to U$, together with the continuity of $U$, gives
$$
H_j(r_j)\to H_U(r_\ast).
$$
But $r_\ast\ge R_0$, so 
$$
H_U(r_\ast)\le S_U<K,
$$
contradicting $H_j(r_j)=K$ in \eqref{eq:first-crossing}. Hence $r_j\to\infty$.

The first-crossing property implies the pointwise estimate
\begin{equation}\label{eq:first-crossing-pointwise}
   v_j(y)\le C K |y|^{-\alpha}
   \qquad\hbox{for }2R_0\le |y|\le 2r_j.
\end{equation}
Indeed, if $|y|\le r_j$, choose $r=|y|$ in \eqref{eq:first-crossing}; if $r_j<|y|\le2r_j$, use the annulus
$A_{r_j}$.

Define the neck rescaling
\begin{equation}\label{eq:wj-def}
   w_j(x):=r_j^\alpha v_j(r_jx).
\end{equation}
Then $w_j$ is defined at least in $B_2$, and
\begin{equation}\label{eq:wj-annulus-height}
   \sup_{A_1}w_j=K,
   \qquad A_1=B_2\setminus B_{1/2}.
\end{equation}
The equation for $w_j$ is
\begin{equation}\label{eq:wj-eqn}
   -\Delta_p w_j=\widetilde f_j(w_j),
\end{equation}
where
\begin{equation}\label{eq:ftilde-def}
   \widetilde f_j(s):=r_j^n f_j(r_j^{-\alpha}s).
\end{equation}
To verify \eqref{eq:ftilde-def}, note that
$$
   \nabla w_j(x)=r_j^{\alpha+1}\nabla v_j(r_jx)
$$
and hence
$$
   -\Delta_p w_j(x)=r_j^{\alpha(p-1)+p}f_j(v_j(r_jx)).
$$
Since $\alpha(p-1)=n-p$, this exponent is $n$.

Using \eqref{eq:fj-upper}, we get
\begin{equation}\label{eq:ftilde-bound}
   0\le \widetilde f_j(s)
   \le \Lambda_0 r_j^{n-\alpha q}s^q.
\end{equation}
The exponent is negative, since
\begin{equation}\label{eq:critical-exponent-neck}
   n-\alpha q
   =n-\frac{n-p}{p-1}\cdot\frac{n(p-1)+p}{n-p}
   =-\frac{p}{p-1}.
\end{equation}

We now prove the   annular Harnack estimate. Since $r_j\to\infty$, for all
large $j$ we have $r_j/3\ge R_s$. Also, because $r_j\le \Gamma_j/2$, one has
$3r_j<4\Gamma_j$. Hence \eqref{eq:prelim-singular-control} applies to $r_jx$ for
every $x\in\mathcal D$. Therefore
$$
   w_j(x)
   =
   r_j^\alpha v_j(r_jx)
   \le
   C_s r_j^{\alpha-a}|x|^{-a}
   \qquad\text{for }x\in\mathcal D.
$$
Since $w_j>0$, write
$$
   -\Delta_p w_j=b_j(x)w_j^{p-1},
   \qquad
   b_j(x):=\frac{\widetilde f_j(w_j(x))}{w_j(x)^{p-1}}.
$$
Using \eqref{eq:ftilde-bound} and the preceding estimate, we obtain on $\mathcal D$
$$
   0\le b_j(x)
   \le
   \Lambda_0 r_j^{-\frac{p}{p-1}}w_j(x)^{q-p+1}
   \le
   \Lambda_0 C_s^{q-p+1}|x|^{-a(q-p+1)}
   \le B_s,
$$
because
$$
   (\alpha-a)(q-p+1)=\frac{p}{p-1}.
$$
Thus the Harnack-chain inequality in $\mathcal D$ gives
\begin{equation}\label{eq:buffered-harnack-wj}
   \inf_{A_{\rm in}}w_j
   \ge c_H\sup_{A_1}w_j
   =c_HK.
\end{equation}
On every compact set $K_0\Subset B_2\setminus\{0\}$, the first-crossing estimate
\eqref{eq:first-crossing-pointwise} gives a uniform upper bound for $w_j$. Therefore
\eqref{eq:ftilde-bound} and \eqref{eq:critical-exponent-neck} imply
$$
   \widetilde f_j(w_j)\to0
   \qquad\text{locally uniformly on compact subsets of }B_2\setminus\{0\}.
$$

By the local Harnack inequality and local $C^{1,\beta}$ estimates for $p$-Laplace type equations, after
passing to a subsequence,
\begin{equation*}\label{eq:wj-to-w}
   w_j\to w\qquad\hbox{in }C^1_{\loc}(B_2\setminus\{0\}),
\end{equation*}
where
\begin{equation*}\label{eq:w-pharmonic}
   -\Delta_p w=0\qquad\hbox{in }B_2\setminus\{0\}.
\end{equation*}
The relevant regularity and Harnack theory for quasilinear equations goes back to Serrin, Trudinger,
Ural'tseva, Lewis, Tolksdorf, and others; see~\cite{S1964,T1967,L1983,T1984,PS2007}.

The estimate \eqref{eq:first-crossing-pointwise} gives
\begin{equation*}\label{eq:w-growth-upper}
   0<w(x)\le C K |x|^{-\alpha}
   \qquad\hbox{in }B_2\setminus\{0\}.
\end{equation*}
Thus, we can apply the Kichenassamy--V\'eron pole theorem, namely Theorem~\ref{thm:KV}.

We identify the coefficient of the pole.  For $0<\rho<1$, integrating \eqref{eq:wj-eqn} over $B_\rho$ yields
\begin{equation}\label{eq:flux-wj}
   -\int_{\partial B_\rho}|\nabla w_j|^{p-2}\partial_\nu w_j\,dS
   =\int_{B_\rho}\widetilde f_j(w_j)\,dx.
\end{equation}
Moreover, \eqref{eq:wj-def} and  \eqref{eq:ftilde-def} give
\begin{equation}\label{eq:undo-flux-scaling}
   \int_{B_\rho}\widetilde f_j(w_j)\,dx
   =\int_{B_{\rho r_j}} f_j(v_j)\,dy.
\end{equation}
 Let $R>2R_0$ be fixed. The convergence $v_j\to U$ and
\eqref{eq:fj-convergence} imply
$$
   \int_{B_R}f_j(v_j)\,dy
   \to
   \int_{B_R}L_U U^{p^*-1}\,dy.
$$
For $R\le |y|\le \rho r_j$, the first-crossing estimate
\eqref{eq:first-crossing-pointwise} gives
$$
   v_j(y)\le CK |y|^{-\alpha}.
$$
Therefore, by \eqref{eq:fj-upper},
$$
   f_j(v_j(y))\le C |y|^{-\alpha q}.
$$
Since
$$
   \alpha q=n+\frac{p}{p-1}>n,
$$
the right-hand side is integrable at infinity. Hence, after first letting $j\to\infty$
and then $R\to\infty$,
\begin{equation*}\label{eq:source-total-limit}
   \int_{B_{\rho r_j}} f_j(v_j)\,dy
   \to\int_{\R^n}L_U U^{p^*-1}\,dy.
\end{equation*}
Passing to the limit in \eqref{eq:flux-wj}, we obtain
\begin{equation*}\label{eq:w-flux-total}
   -\int_{\partial B_\rho}|\nabla w|^{p-2}w_\nu\,dS
   =\int_{\R^n}L_U U^{p^*-1}\,dy.
\end{equation*}
The bubble tail \eqref{eq:bubble-tail} gives
\begin{equation*}\label{eq:bubble-flux}
   \int_{\R^n}L_U U^{p^*-1}\,dy
   =\omega_{n-1}(\alpha\kappa)^{p-1}.
\end{equation*}
Therefore the pole coefficient of $w$ is exactly $\kappa$, and
\begin{equation*}\label{eq:w-kappa-plus-z}
   w(x)=\kappa |x|^{-\alpha}+z(x),
   \qquad z\in L^\infty_{\loc}(B_2).
\end{equation*}

We next prove that the regular part is positive. By the buffered annular Harnack
estimate \eqref{eq:buffered-harnack-wj},
$$
   \inf_{A_{\rm in}}w_j\ge c_HK.
$$
Since $A_{\rm in}\Subset B_2\setminus\{0\}$, the convergence
$w_j\to w$ in $C^1_{\loc}(B_2\setminus\{0\})$ gives
$$
   \inf_{A_{\rm in}}w\ge c_HK.
$$
In particular, because $\partial B_1\subset A_{\rm in}$, the choice of $K$ gives
\begin{equation}\label{eq:w-boundary-positive-regular}
   w\ge \kappa+2
   \qquad\text{on }\partial B_1 .
\end{equation}

For $0<\delta<\kappa$, define
\begin{equation}\label{eq:phi-delta}
   \phi_\delta(x):=(\kappa-\delta)|x|^{-\alpha}+1.
\end{equation}
Both $w$ and $\phi_\delta$ are $p$-harmonic in $B_1\setminus\{0\}$. By
\eqref{eq:w-boundary-positive-regular}, $w\ge\phi_\delta$ on $\partial B_1$.
Near $0$, using \eqref{eq:w-kappa-plus-z},
$$
   w-\phi_\delta
   =
   \delta |x|^{-\alpha}+z(x)-1>0
$$
for $|x|$ sufficiently small. The comparison principle in $B_1\setminus B_\varepsilon$,
followed by letting $\varepsilon\downarrow0$, gives
$$
   w\ge\phi_\delta
   \qquad\text{in }B_1\setminus\{0\}.
$$
Letting $\delta\downarrow0$, we obtain
$$
   z(x)=w(x)-\kappa |x|^{-\alpha}\ge1
   \qquad\text{in }B_1\setminus\{0\}.
$$
In particular,
$$
   \beta:=\lim_{x\to0}z(x)\ge1,
$$
and the regular part is positive at the pole in the sense of
Proposition~\ref{prop:pole-pohozaev}.

Since $w$ is $p$-harmonic in $B_2\setminus\{0\}$, Proposition \ref{prop:pole-pohozaev} gives
\begin{equation*}\label{eq:w-P-negative}
   \mathcal P_w(1)=  -a\,\omega_{n-1}(\alpha\kappa)^{p-1}\beta<0.
\end{equation*}

On the other hand, the Pohozaev sign for $w_j$ gives the opposite inequality.  Indeed, define
$$
   \widetilde F_j(s):=\int_0^s\widetilde f_j(t)\,dt.
$$
The sign \eqref{eq:fj-pohozaev-sign} is preserved by the scaling \eqref{eq:ftilde-def}; hence
$$
   n\widetilde F_j(s)-a\,s\widetilde f_j(s)\ge0.
$$
By Proposition \ref{prop:pohozaev},
\begin{equation*}\label{eq:P-wj-nonnegative}
   P_{w_j}(1)
   =\int_{B_1}\left[n\widetilde F_j(w_j)-a\,w_j\widetilde f_j(w_j)\right]dx
   \ge0.
\end{equation*}
On $\partial B_1$, $w_j\to w$ in $C^1$, and by \eqref{eq:ftilde-bound}--\eqref{eq:critical-exponent-neck},
$$
   \widetilde F_j(w_j)\to0
   \qquad\hbox{uniformly on }\partial B_1.
$$
Thus
\begin{equation*}\label{eq:P-limit-nonnegative}
   \mathcal P_w(1)=\lim_{j\to\infty}P_{w_j}(1)\ge0.
\end{equation*}
This contradicts Proposition~\ref{prop:pole-pohozaev}.  Therefore \eqref{eq:slow-decay} is impossible, and the proof of
\eqref{eq:neck-conclusion} is complete.
\end{proof}

\section{The Harnack theorem}

With the neck lemma in hand, we now give the proof of Theorem~\ref{thm:main-intro}, which was stated in the introduction.
\begin{proof}[Proof of Theorem~\ref{thm:main-intro}]
Assume by contradiction that the estimate fails.  Then there exist
sequences $R_j>0$ and positive weak solutions $u_j$ of $-\Delta_p u_j=g(u_j)$
in $B_{3R_j}$ such that
\begin{equation}\label{eq:contra-assumption}
\sup_{B_{R_j}}u_j\ge 1,\qquad
R_j^{\,n-p}\,M_j\,m_j^{p-1}\to +\infty,
\end{equation}
where we set
$$M_j:=\sup_{B_{R_j}}u_j,\qquad m_j:=\inf_{B_{2R_j}}u_j . $$

Define 
$d(x):=\operatorname{dist}(x,\partial B_{2R_j})$ for $x\in B_{2R_j}$.
Choose $x_j\in B_{2R_j}$ such that
\begin{equation}\label{eq:doubling-selection}
d(x_j)^{n-p}u_j(x_j)=\max_{B_{2R_j}} d(x)^{n-p}u_j(x).
\end{equation}
Set
$$\widetilde M_j:=u_j(x_j),\quad d_j:=d(x_j),\quad \sigma_j:=\frac{d_j}{8}.$$
Since $d(x)\ge R_j$ for $x\in B_{R_j}$, \eqref{eq:doubling-selection} yields
\begin{equation}\label{eq:Mj-lower-bound}
\widetilde M_j\, d_j^{\,n-p}\ge R_j^{\,n-p} M_j .
\end{equation}

If $y\in B_{4\sigma_j}(x_j)=B_{d_j/2}(x_j)$, then $d(y)\ge d_j/2$.  Hence, by
\eqref{eq:doubling-selection},
\begin{equation*}\label{eq:local-height-bound}
   u_j(y)\le 2^{n-p}\widetilde M_j
   \qquad\hbox{for }y\in B_{4\sigma_j}(x_j).
\end{equation*}
Combining \eqref{eq:contra-assumption} with \eqref{eq:Mj-lower-bound}, we also obtain
\begin{equation}\label{eq:Mtilde-sigma-large-2}
   \widetilde M_j\sigma_j^{n-p}m_j^{p-1}\to\infty.
\end{equation}

Define the critical blow-up scale
\begin{equation*}\label{eq:rho-Gamma-def}
   \rho_j:=\widetilde M_j^{-\frac{p}{n-p}},\qquad
\Gamma_j:=\frac{\sigma_j}{\rho_j}
        =\sigma_j\,\widetilde M_j^{\frac{p}{n-p}} .
\end{equation*}
Then $\Gamma_j\to\infty$.  Indeed,
$$
   \Gamma_j^{n-p}
   =\sigma_j^{n-p}\widetilde M_j^p
   =\left(\widetilde M_j\sigma_j^{n-p}m_j^{p-1}\right)
     \left(\frac{\widetilde M_j}{m_j}\right)^{p-1},
$$
and the first factor tends to infinity by \eqref{eq:Mtilde-sigma-large-2}, while the second is at least $1$.

  Now, define the rescaled functions
\begin{equation}\label{eq:vj-def}
v_j(y):=\widetilde M_j^{-1}\,u_j(x_j+\rho_j y),\qquad |y|<4\Gamma_j .
\end{equation}
Then $$v_j(0)=1, \quad 0<v_j(y)\le 2^{n-p}\quad \text{in}\ B_{4 \Gamma_j}.$$ Moreover, 
\begin{equation}\label{eq:vj-eqn}
-\Delta_p v_j = f_j(v_j), 
\end{equation}
where $$f_j(s):=\widetilde M_j^{-q}\,g(\widetilde M_j s).$$  The global upper bound \eqref{eq:g-upper-critical} gives
$$
   f_j(s)\le \Lambda s^q.
$$

Passing to a subsequence we may assume that
$A:=\lim_{j\to\infty}\widetilde M_j\in[2^{p-n},\infty]$. Note that $A\ge 2^{-(n-p)}>0$ because $M_j\ge 1$ and
$\widetilde M_j\ge 2^{-(n-p)}M_j$ by \eqref{eq:Mj-lower-bound} and $d_j\le 2R_j$. By local compactness and Harnack, after passing to a subsequence,
\begin{equation*}\label{eq:vj-to-V-final}
   v_j\to V
   \qquad\hbox{in }C^1_{\loc}(\R^n),
\end{equation*}
where
\begin{equation*}\label{eq:V-entire-final}
   -\Delta_p V=g_A(V)\qquad\hbox{in }\R^n,
\end{equation*}
with
$$
   0<V\le 2^{n-p},
   \qquad
   V(0)=1.
$$ Here the nonlinearity $g_A(s)$ is defined by \eqref{def-ga}, with the constant $L$ taken from \eqref{eq:g-limit}.

By the classification hypothesis stated in Assumption~\ref{ass:classification},
$V$ is an Aubin--Talenti $p$-bubble, and substituting its explicit form into the
equation yields $g_A(s)=L_A s^q$ on the range $0<s\le \max_{\mathbb R^n} V$.

Note that
$$
   f_j(s)\to  g_A(s)
   \qquad\hbox{locally uniformly for }s>0. 
$$
The monotonicity of $s^{-q}g(s)$ gives the Pohozaev sign for $f_j$ by
Lemma~\ref{lem:pohozaev-sign}. By the second assumption in Assumption~\ref{ass:classification}, this normalized
blow-up neck sequence also satisfies the preliminary singular control \eqref{eq:prelim-singular-control}. Therefore Lemma~\ref{lem:neck} applies and yields
\begin{equation}\label{eq:neck-conclusion-main}
\inf_{\partial B_{\Gamma_j}} v_j \le C \,\Gamma_j^{-\alpha}.
\end{equation}

For $|y|=\Gamma_j$, the point $x_j+\rho_j y\in B_{\sigma_j}(x_j)\subset B_{2R_j}$.
Consequently, 
\begin{equation}\label{eq:boundary-lower-mj}v_j(y)=\widetilde M_j^{-1}\,u_j(x_j+\rho_j y)\ge \widetilde M_j^{-1}\,m_j.\end{equation}
Combining this with~\eqref{eq:neck-conclusion-main} gives
$$  
\frac{m_j}{\widetilde M_j}\le C \,\Gamma_j^{-\alpha}.
$$
Using $\Gamma_j=\sigma_j\,\widetilde M_j^{p/(n-p)}$ and
$\alpha=(n-p)/(p-1)$,  we get
$$m_j\le C \,\sigma_j^{-\alpha}\,\widetilde M_j^{-\frac{1}{p-1}}.$$
Therefore 
 $$\widetilde M_j \sigma_j^{n-p} \,m_j^{p-1}\le C,$$ which contradicts~\eqref{eq:Mtilde-sigma-large-2}.
The contradiction shows that the original assumption was false, and the Harnack
inequality
holds for every $R>0$ and every positive weak solution $u$ in $B_{3R}$
with $\sup_{B_R}u\ge 1$.  The proof is complete.
\end{proof}

\end{document}